\documentclass{article}
\usepackage[a4paper, left=2cm, right=2cm, top=3cm, bottom=4cm,]{geometry}
\usepackage[utf8]{inputenc}
\usepackage{graphicx}
\usepackage{caption}
\usepackage{amsthm}
\usepackage{amsmath}

\usepackage{todonotes}
\usepackage{hyperref}
\usepackage{rotating}

\newcommand{\DecisionProblem}[3]{
\begin{center}
\begin{tabular}{lp{0.65\textwidth}}
\hline\hline
\multicolumn{2}{c}{\sc #1} \\
\hline
\em{Instance:}& #2\\
\em{Question:}& #3 \\ 
\hline\hline\\
\end{tabular}
\end{center}
}

\newtheorem{theorem}{Theorem}
\newtheorem{definition}{Definition}

\newtheorem{proposition}{Proposition}

\newcommand{\GXC}{G_{X,\mathcal{C}}}
\newcommand{\GX}[1]{G^{X}_{#1}}
\newcommand{\GC}{G^{\mathcal{C}}}

\newcommand{\scomp}{\updownarrow}
\newcommand{\pcomp}{\leftrightarrow}

\title{Geodetic sets for directed acyclic planar geodetic graphs}
\author{Benedikt Georg Hein$\null^1$ and Egon Wanke$\null^2$ \\[12pt]
\small $\null^{1,2}$Heinrich-Heine-Universität Düsseldorf, Germany \\
\small \tt $\null^1$BeHei111@hhu.de, $\null^2$Egon.Wanke@hhu.de}

\begin{document}

\date{}
\maketitle

\begin{abstract}
A set of vertices $S$ of a directed graph $G$ is {\em geodetic} if every vertex of $G$ lies on a shortest path from a vertex of $S$ to a vertex of $S$. A directed graph is {\em geodetic} if there is at most one shortest path from every vertex of $G$ to every vertex of $G$. We prove the NP-completeness of the following decision problem. Given a directed acyclic planar geodetic graph $G$ and an integer $k$, does $G$ have a geodetic set with at most $k$ vertices? This implies that the question of whether $G$ has a strong or a monitoring geodetic set with at most $k$ vertices is also NP-complete for directed acyclic planar geodetic graphs. Furthermore, we prove that the number of vertices in a minimum geodetic set and the number of vertices in a minimum edge geodetic set can be computed in linear time for directed acyclic series-parallel graphs.
\end{abstract}

\centerline{{\bf Keywords :} Geodetic graph, geodetic number, strong geodetic set, monitoring geodetic set}

\maketitle{}

\section{Introduction}

We define various geodetic sets for directed graphs that are determined in a similar way. To avoid repetitions as much as possible, the edge variants are defined using the expressions given in parentheses.

For a graph $G=(V,E)$, let $V(G)=V$ be the set of vertices and $E(G)=E \subseteq V\times V$ be the set of edges of $G$. The length of a path $p$ is the number of its edges. Sometimes we represent paths as a sequence of vertices, sometimes as a sequence of edges, depending on what information about the paths is relevant to our considerations. A vertex $w$ (an edge $e$) is {\em covered by a path} $p$ if $p$ contains the vertex $w$ (the edge $e$). A vertex $w$ (an edge $e$) is {\em covered by a set of vertices} $S$ if there exist two vertices $u,v \in S$ and a shortest path from $u$ to $v$ that covers the vertex $w$ (the edge $e$).

A vertex set $S \subseteq V(G)$ is a {\em geodetic set} (an {\em edge geodetic set}) of $G$ if every vertex $w \in V(G)$ (every edge $e \in E(G)$) is covered by a shortest path from a vertex of $S$ to a vertex of $S$. The vertices $u \in S$ are always covered by the vertices in $S$, because the path consisting of the single vertex $u$ is always a shortest path from $u$ to $u$.

A vertex set $S \subseteq V(G)$ is called a strong geodetic set (strong edge geodetic set) of $G$ if there exists a set of shortest paths $\cal P$, each from some vertex $u \in S$ to some vertex $v \in S$ such that every vertex in $V(G)$ (every edge in $E(G)$) is covered by at least one of these paths. It should be noted here that, for every pair of nodes $u, v \in S$, the set $\cal P$ may contain at most one path from $u$ to $v$.

A vertex set $S \subseteq V(G)$ is a {\em monitoring geodetic set} ({\em monitoring edge geodetic set}) of $G$ if for each vertex $w \in V(G)$ (edge $e \in E(G)$) there is a pair of vertices $u,v \in S$ such that vertex $w$ (edge $e$) is covered by all shortest path from $u$ to $v$.

Let $\text{GS}(G)$, $\text{SGS}(G)$, $\text{MoGS}(G)$, $\text{EGS}(G)$, $\text{SEGS}(G)$ and $\text{MoEGS}(G)$ be the set of all geodetic sets, strong geodetic sets, monitoring geodetic sets, edge geodetic sets, strong edge geodetic sets and monitoring edge geodetic sets, respectively, of a graph $G$.
If every vertex has at least one outgoing or at least one incoming edge then every edge geodetic set is a geodetic set, every strong edge geodetic set is a strong geodetic set and every monitoring edge geodetic set is a monitoring geodetic set. That is, for every graph $G$ satisfying this condition it holds that $\text{EGS}(G) \subseteq \text{GS}(G)$, $\text{SEGS}(G) \subseteq \text{SGS}(G)$ and $\text{MoEGS}(G) \subseteq \text{MoGS}(G)$. 
Furthermore, every strong geodetic (strong edge geodetic) set obviously is a geodetic set (an edge geodetic set) and every monitoring geodetic set (monitoring edge geodetic set) is a strong geodetic set (strong edge geodetic set, respectively). The second statement follows from the following fact: If a vertex $w$ (edge $e$) lies on all shortest paths from a vertex $u$ to a vertex $v$, then we can use any shortest path $p_{u,v}$ from $u$ to $v$ for the definition of a set $\mathcal P$ of paths that forms a strong geodesic set (strong edge geodesic set) of $G$. The union of the vertices $V(p_{u,v})$ (edges $E(p_{u,v})$) of all these shortest paths is the entire set $V(G)$ of vertices ($E(G)$ of edges) of $G$. That is, for every graph $G$ it holds $\text{SGS}(G) \subseteq \text{GS}(G)$, $\text{SEGS}(G) \subseteq \text{EGS}(G)$, $\text{MoGS}(G) \subseteq \text{SGS}(G)$ and $\text{MoEGS}(G) \subseteq \text{SEGS}(G)$.

A graph in which there is at most one shortest path from every vertex $u$ to every vertex $v$ is called a {\em geodetic graph}. In a geodetic graph every geodetic set is a monitoring geodetic set and every edge geodetic set is a monitoring edge geodetic set.

That is, for every graph $G$ it holds $\text{MoGS}(G) = \text{GS}(G)$ and $\text{MoEGS}(G) = \text{EGS}(G)$.
The inclusions of these sets are summarized in Table \ref{Table01} for general directed graphs and in Table \ref{Table02} for geodetic directed graphs.

\begin{table}[hbt]
\center
\begin{tabular}{ccccc}
$\text{MoGS}(G)$ & $\subseteq$ & $\text{SGS}(G)$ & $\subseteq$ & $\text{GS}(G)$ \\
\begin{turn}{90}{$\subseteq$}\end{turn} & & \begin{turn}{90}{$\subseteq$}\end{turn} & & \begin{turn}{90}{$\subseteq$}\end{turn} \\
$\text{MoEGS}(G)$ & $\subseteq$ & $\text{SEGS}(G)$ & $\subseteq$ & $\text{EGS}(G)$ \\
\end{tabular}
\caption{The inclusions of geodetic sets on general directed graphs.}
\label{Table01}
\end{table}

\begin{table}[hbt]
\center
\begin{tabular}{ccccc}
$\text{MoGS}(G)$ & $=$ & $\text{SGS}(G)$ & $=$ & $\text{GS}(G)$ \\
\begin{turn}{90}{$\subseteq$}\end{turn} & & \begin{turn}{90}{$\subseteq$}\end{turn} & & \begin{turn}{90}{$\subseteq$}\end{turn} \\
$\text{MoEGS}(G)$ & $=$ & $\text{SEGS}(G)$ & $=$ & $\text{EGS}(G)$ \\
\end{tabular}
\caption{The inclusions of geodetic sets on geodetic directed graphs.}
\label{Table02}
\end{table}

The {\em (edge) geodetic number}, {\em strong (edge) geodetic number} and {\em monitoring (edge) geodetic number} of a graph $G$ is the minimum number of vertices in a (an edge) geodetic set, strong (edge) geodetic set and monitoring (edge) geodetic set of $G$, respectively.

Geodetic sets were first considered for undirected graphs in \cite{BH1990} and more formally introduced by Harrary et al.~in \cite{HLC1993} and later in \cite{CHZ2002}, see also \cite{HO2007}. A first correct proof of the NP-completeness for deciding whether a given graph $G$ has a geodetic set less than or equal to a given number $k$ can be found in \cite{Ati2002}. This {\sc Minimum Geodetic Set (MGS)} problem is also NP-complete for chordal graphs and bipartite weakly chordal graphs, see \cite{DPRS2010}. The NP-completeness of {\sc MGS} was also shown for interval graphs \cite{CDFGLR2020}, cobipartite graphs \cite{EE2014}, subcubic graphs \cite{BPPRRS2018} and subcubic partial grids of arbitrary girth \cite{CHB2023,CDFGLR2020}. Chakraborty et al.~\cite{CFGGR2020} proved the NP-completeness of {\sc MGS} for undirected planar graphs and line graphs.

Geodetic sets can be computed in polynomial time on ptolemaic\footnote{A ptolemaic graph is an undirected graph whose shortest path distances obey Ptolemy's inequality. That is, for every four vertices $u$, $v$, $w$, and $x$, the inequality $d(u,v)d(w,x) + d(u,x)d(v,w) \geq d(u,w)d(v,x)$ holds.} graphs\cite{FJ1986}, co-graphs, split graphs \cite{DPRS2010}, distance-hereditary graphs \cite{KN2013}, proper interval graphs \cite{EEHHM2012}, outerplanar graphs \cite{Mau2018}, solid grids, k-trees, with fixed k \cite{CDFGLR2020} and complete multipartite graphs \cite{DIT2021}. The problem is also studied for product graphs \cite{BKH2008,FMS2015}, block-cactus graphs \cite{WWC2006}, line graphs \cite{GAVM2012} and median graphs \cite{BH2008}. It has been proven that {\sc MGS} is LOG-APX-hard \cite{DIT2021} and W[1]-hard when parameterized by some graph parameters \cite{KK2022}.

Strong geodetic sets were first considered by Arokiaraj et al.~in \cite{AKMTX2020}. They were studied on cartesian products of graphs \cite{IK2018} and on complete multipartite graphs \cite{IK2019}. In the context of the diameter of a graph, it was discussed by \cite{Irs2018}. It is shown in \cite{Mez2022} that the strong geodetic number of an outerplanar graph can be computed in polynomial time.

Edge geodetic sets were first studied by Atici \cite{Ati2003} and were analyzed for some products of graphs \cite{SC2010,ACC2018}. They were also considered in the context of fuzzy graphs in \cite{RS2019}.
Manuel et al.~\cite{MKXAT2017} showed that the {\sc Minimum Strong Edge Geodetic Set (MSEGS)} problem is NP-complete.

Monitoring edge geodetic sets are defined by Foucaud et al.~in \cite{FNS2023}. Extensive research has been conducted on this topic in recent years, as shown in \cite{Has2023,TLWL2023,MJYL2024,XYBZS2024,FMMSST2025}. Therefore, it is natural to also consider a vertex version of this problem, which is referred to here simply as monitoring geodetic set.

The \textsc{MGS} problem on oriented graphs was studied in \cite{CZ2000}. In this context, the upper and lower geodetic numbers of graphs are often considered \cite{Lu2007,DLW2009,HTW2009}. The oriented version of the monitoring edge geodetic set problem was considered in \cite{DFMDS2025}.

The term "geodetic graph" was first introduced by Ore in \cite{Ore1962} for undirected graphs, see also \cite{BH1990}, and is adopted here for directed graphs. The directed version of a strong geodetic set was first defined in \cite{CTW2004}. 

In this paper, we prove the NP-completeness of the decision problem: given a directed acyclic planar geodetic graph $G$ and an integer $k$, does $G$ have a geodetic set with at most $k$ vertices? This implies that the minimum strong and the minimum monitoring geodetic set problems are also NP-complete for directed acyclic planar geodetic graphs. In Section 4, it is shown that both the geodetic number and the edge geodetic number can be computed in linear time for directed series-parallel graphs. Series-parallel graphs have been long studied in contexts such as electric circuits 
\cite{Duf1965}, complexity of graph algorithms \cite{TNS1982}, and also routing games \cite{Mil2006,EFM2009}.

\section{Preliminaries}

We will first illustrate the definitions of the different geodetic sets using the example of figure \ref{Figure01}. The figure shows a directed graph $G$ that has
\begin{enumerate}
\item a minimum geodetic set $S_1= \{u_5,u_7\}$ which is also a minimum edge geodetic set,
\item a minimum strong geodetic set $S_2= \{u_2,u_5,u_7\}$ verifiable with the shortest paths
$$\begin{array}{lll}
p_{u_5,u_2} & = & u_5,u_1,u_2, \\
p_{u_2,u_7} & = & u_2,u_3,u_4,u_7 \text{ and} \\
p_{u_5,u_7} & = & u_5,u_8,u_6,u_9,u_{10},u_7, \\
\end{array}$$
\item a minimum strong edge geodetic set $S_3= \{u_3,u_5,u_6,u_7\}$ verifiable with the shortest paths
$$\begin{array}{lll}
p_{u_5,u_3} & = & (u_5,u_1),(u_1,u_2),(u_2,u_3), \\
p_{u_5,u_6} & = & (u_5,u_1),(u_1,u_6), \\
p_{u_6,u_7} & = & (u_6,u_3),(u_3,u_4),(u_4,u_7), \\
p_{u_5,u_7} & = & (u_5,u_8),(u_8,u_6),(u_6,u_9),(u_9,,u_{10}),(u_{10},u_7) \text{ and} \\
\end{array}$$
\item a minimum monitoring geodetic set $S_3= \{u_2,u_5,u_7,u_8,u_9\}$ where
\begin{enumerate}
\item $u_1$ is on all shortest paths from $u_5$ to $u_2$,
\item $u_3$ and $u_4$ are on all shortest path from $u_2$ to $u_7$,
\item $u_6$ is on all shortest paths from $u_5$ to $u_9$ and
\item $u_{10}$ is on all shortest path from $u_9$ to $u_7$,
\end{enumerate}
and
\item a minimum monitoring edge geodetic set $S_3= \{u_1,u_2,u_3,u_5,u_7,u_8,u_9\}$ where
\begin{enumerate}
\item $(u_5,u_1)$ is on all shortest paths from $u_5$ to $u_1$,
\item $(u_1,u_2)$ is on all shortest paths from $u_1$ to $u_2$,
\item $(u_2,u_3)$ is on all shortest paths from $u_2$ to $u_3$,
\item $(u_3,u_4)$ and $(u_4,u_7)$ are on all shortest paths from $u_3$ to $u_7$,
\item $(u_5,u_8)$ is on all shortest paths from $u_5$ to $u_8$,
\item $(u_1,u_6)$ and $(u_6,u_9)$ are on all shortest paths from $u_1$ to $u_9$,
\item $(u_8,u_6)$ and $(u_6,u_3)$ are on all shortest paths from $u_8$ to $u_3$ and
\item $(u_9,u_{10})$ and $(u_{10},u_7)$ are on all shortest paths from $u_9$ to $u_7$.
\end{enumerate}
\end{enumerate}
Graph $G$ is not geodetic because, for example, there are two shortest paths from $u_1$ to $u_3$.

\begin{figure}[hbt]
\centerline{\includegraphics[width=170pt]{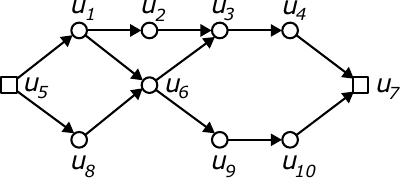}}
\caption{A directed graph $G$. Vertices without incoming edges or without outgoing edges are drawn as squares.}
\label{Figure01}
\end{figure}

The vertices without incoming edges are called {\em terminal start vertices} and the vertices without outgoing edges are called {\em terminal end vertices}. For a directed graph $G$, let $R(G)$ be the set of all terminal start and terminal end vertices. Terminal vertices are only on shortest paths that start or end at them. Vertex set $R(G)$ is a subset of every geodetic set and every edge geodetic set. In the figures we draw terminal vertices as squares.

The decision problems {\sc Minimum Geodetic Set (MGS)}, {\sc Minimum Strong Geodetic Set (MSGS)}, {\sc Minimum Monitoring Geodetic Set (MMoGS)}, {\sc Minimum Edge Geodetic Set (MEGS)}, {\sc Minimum Strong Edge Geodetic Set (MSEGS)} and {\sc Minimum Monitoring Edge Geodetic Set (MMoEGS)} are defined as follows.

\DecisionProblem{Minimum (Strong / Monitoring) (Edge) Geodetic Set}{A graph $G$ and a positive integer $k \leq |V(G)|$.}{Has $G$ a (strong / monitoring) (edge) geodetic set with at most $k$ vertices?}

\medskip
Using a simple polynomial-time reduction from the NP-complete decision problem {\sc Exact Cover by 3 Sets (X3C)}, see \cite{GJ1979}, we show that {\sc MGS} is NP-hard for directed acyclic geodetic graphs. Since $\text{GS}(G)=\text{SGS}(G)=\text{MoGS}(G)$ for geodetic graphs, it follows that {\sc MSGS} and {\sc MMoGS} are also NP-complete for directed acyclic geodetic graphs, where NP-membership follows from the fact that solutions can be verified in polynomial time. These statements are already known, however, we repeat the simple, unified NP-completeness proof because it treats all variants of geodetic sets considered in this paper uniformly. {\sc X3C} is defined as follows.

\DecisionProblem{Exact Cover by 3 Sets (X3C)}{A set $X$ with $3n$ Elements and a collection $\mathcal{C}$ of three-element subsets of $X$.}{Is there a subset $\mathcal{C}'$ of $\mathcal{C}$ with at most $n$ three-element sets such that every element of $X$ occurs in exactly one member of $\mathcal{C}'$?}

\begin{theorem}
{\sc MGS} is NP-complete for directed acyclic geodetic graphs.
\label{Theorem01}
\end{theorem}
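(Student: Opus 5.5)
The plan is a polynomial-time reduction from {\sc X3C} that produces a graph $\GXC$. In $\GXC$ the terminal set $R(\GXC)$ is forced into every geodetic set, the terminals alone cover everything except one vertex per element of $X$, and each extra non-terminal vertex can cover at most three of these element vertices, and exactly three only when it represents a member of $\mathcal{C}$.

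\textbf{NP-membership.} A candidate set $S$ can be verified in polynomial time. Compute all-pairs distances by BFS. A vertex $w$ lies on a shortest $u$--$v$ path if and only if $d(u,w)+d(w,v)=d(u,v)$.

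\textbf{Construction of $\GXC$.} Given $(X,\mathcal{C})$ with $|X|=3n$, build $\GXC$ as follows.
\begin{itemize}
\item For every $x\in X$, add an element vertex $v_x$ and a terminal end vertex $t_x$, with the edge $(v_x,t_x)$.
\item For every $C\in\mathcal{C}$, add a set vertex $c_C$, a terminal start vertex $s_C$ and a terminal end vertex $e_C$. Add the edges $(s_C,c_C)$ and $(c_C,e_C)$, and the edge $(c_C,v_x)$ for each $x\in C$.
\item For each $C$ and each $x\in C$, add a shortcut edge $(s_C,t_x)$.
\end{itemize}
The shortcut ensures that the shortest path from $s_C$ to $t_x$ is the single edge, so it bypasses $v_x$. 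Set $k=|R(\GXC)|+n=2|\mathcal{C}|+3n+n$.

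Every edge goes from one layer to a later layer among $s$, $c$, $v$, $t/e$, so the graph is acyclic. Geodeticity is checked by listing all paths, which is easy.
\begin{itemize}
\item From $s_C$, the only reachable vertices are $c_C$, $e_C$, $v_x$ and $t_x$ for $x\in C$. Each is reached by a unique shortest path, namely $s_C c_C$, $s_C c_C e_C$, $s_C c_C v_x$ and the edge $s_C t_x$.
\item From $c_C$, the shortest paths to $e_C$, to $v_x$ and to $t_x$ are each unique.
\item From $v_x$, only $t_x$ is reachable.
\end{itemize}

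\textbf{Coverage analysis.} The key step is to determine exactly what each vertex set covers.
\begin{itemize}
\item The terminals alone cover all terminals, and every $c_C$ via the path $s_C c_C e_C$. No $v_x$ is covered, because every shortest path between terminals avoids the element vertices.
\item Adding a set vertex $c_C$ to $S$ covers exactly $v_x$ for $x\in C$, via the paths $c_C v_x t_x$.
\item Adding $v_x$ covers only $v_x$ itself.
\item Pairs of added non-terminal vertices cover nothing new. There is no path between distinct set vertices, and a path $c_C v_x$ only covers a vertex already in $S$. Likewise, paths from $s_C$ to an added vertex lie inside paths already considered.
\end{itemize}
Hence a set $S\supseteq R(\GXC)$ is geodetic if and only if every $v_x$ is either in $S$ or lies in some $C$ with $c_C\in S$.

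\textbf{Equivalence.} If $\mathcal{C}'$ is an exact cover, then $R(\GXC)\cup\{c_C : C\in\mathcal{C}'\}$ is a geodetic set of size $k$. Conversely, a geodetic set of size at most $k$ contains $R(\GXC)$ plus at most $n$ further vertices. Each further vertex covers at most three of the $3n$ element vertices, so there are exactly $n$ of them. Each is a set vertex covering three new elements, and their sets are pairwise disjoint, so they form an exact cover.

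\textbf{Expected obstacle.} The delicate point is the claim that no unintended shortest path covers an element vertex, in particular $s_C\to v_x$ paths when $v_x\in S$. This is handled by exhaustively listing the pairs of vertices, which the small depth of the construction makes straightforward.
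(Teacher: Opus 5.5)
Your proposal is correct and takes essentially the paper's route: a reduction from X3C to a layered acyclic geodetic graph. In that graph the terminals are forced, and shortcut edges from the start terminals make every terminal-to-terminal shortest path miss exactly one vertex per element, which only a set vertex can cover three at a time. The differences are cosmetic: you make the element vertex $v_x$ itself the uncovered vertex (using $e_C$ to cover $c_C$) and add shortcuts $(s_C,t_x)$ only for $x\in C$, where the paper adds a separate vertex $y_i$ per element and shortcuts $(u_j,w_i)$ for all pairs, and your explicit counting step (each added vertex covers at most three element vertices, with equality only for set vertices) spells out what the paper states tersely.
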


\begin{proof}
{\sc MGS} is obviously in NP. We define a directed acyclic geodetic graph $\GXC$ for an {\sc X3C} instance  $X=\{x_1,\ldots,x_{3n}\}$ and $\mathcal{C}=\{C_1,\ldots,C_m\}$ such that $G_{X,\mathcal{C}}$ has a geodetic set with at most $7n+m$ vertices if and only if $X,\mathcal{C}$ has a solution for {\sc X3C}. The vertices and edges of $G_{X,\mathcal{C}}$ are defined as follows (see figure \ref{Figure02} for an example).
$$
V(\GXC) = \{u_1,\ldots,u_m,\,\widetilde{C}_1,\ldots,\widetilde{C}_{m},\,\widetilde{x}_1,\ldots,\widetilde{x}_{3n}\,v_1,\ldots,v_{3n},\,y_1,\ldots,y_{3n},\,w_1,\ldots,w_{3n}\} \\
$$ and
$$
\begin{array}{lll}
E(\GXC) = & & \{(u_j,\widetilde{C}_j)~|~1 \leq j \leq m\} \\
& \cup & \{(u_j,w_i)~|~1 \leq j \leq m, \, 1 \leq i \leq 3n\} \\
& \cup & \{(\widetilde{C}_j,\widetilde{x}_i)~|~1 \leq j \leq m, \, x_i \in C_j\} \\
& \cup & \{(\widetilde{x}_i,v_i)~|~1 \leq i \leq 3n\} \\
& \cup & \{(\widetilde{x}_i,y_i)~|~1 \leq i \leq 3n\} \\
& \cup & \{(y_i,w_i)~|~1 \leq i \leq 3n\} \\
\end{array}
$$
Graph $\GXC$ has a vertex $\widetilde{x_i}$ for every $x_i \in X$ and a vertex $\widetilde{C_j}$ for every $C_j \in \mathcal{C}$. The vertices $v_1,\ldots,v_{3n}$ are necessary to ensure that all vertices $\widetilde{x}_i$ and $\widetilde{C}_{j}$ are covered. While these vertices could alternatively connected directly to the $\widetilde{C_j}$ vertices, but doing so would make the proof harder to follow. It has $6n+m$ terminal vertices $u_1,\ldots,u_j,\,v_1,\ldots,v_{3n},\,w_1,\ldots,w_{3n}$ and can be constructed in polynomial time from instance $X,\mathcal{C}$. It is easy to check that $\GXC$ is geodetic. 

\medskip
\begin{figure}[hbt]
\centerline{\includegraphics[width=417pt]{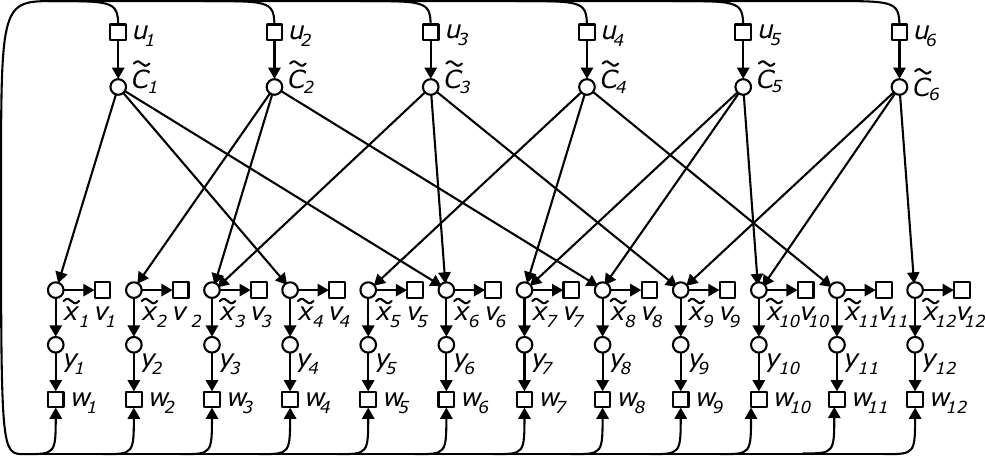}}
\caption{The directed geodetic graph $\GXC$ for $X=\{x_1,\ldots,x_{12}\}$ and $\mathcal{C}=\{C_1,\ldots,C_6\}$ with $C_1=\{x_1,x_4,x_6\}$, $C_2=\{x_2,x_3,x_8\}$, $C_3=\{x_3,x_6,x_9\}$, $C_4=\{x_5,x_7,x_{11}\}$, $C_5=\{x_7,x_8,x_{10}\}$ and $C_6=\{x_9,x_{10},x_{12}\}$.}
\label{Figure02}
\end{figure}
The only vertices of $\GXC$ that are not already covered by shortest paths from terminal start vertices to terminal end vertices are the vertices $y_1,\ldots,y_{3n}$. It remains to show that $\GXC$ has a geodetic set with at most $7n+m$ vertices if and only if $X,\mathcal{C}$ has a solution for {\sc X3C}.

$\Rightarrow$ Suppose there is a subset $\mathcal{C'} \subseteq \mathcal{C}$ with $n$ three-element sets such that every element of $X$ occurs in exactly one member of $\mathcal{C'}$, then vertex set  $S=R(\GXC) \cup \{ \widetilde{C}_{j} ~|~ C_j \in \mathcal{C'}\}$ is a geodetic set with $(6n+m)+n = 7n+m$ vertices.

$\Leftarrow$ Conversely, let $S$ be a geodetic set of $\GXC$ with at most $7n+m$ vertices. Since $\GXC$ has exactly $6n+m$ terminal vertices, the set $S$ has at most $n$ non-terminal vertices. In order to cover the $3n$ vertices $y_1,\ldots,y_{3n}$ with shortest paths, at least $n$ of the $m$ non-terminal vertices $\widetilde{C}_j$, $1 \leq j \leq m$, are necessary. Thus, $S$ has exactly $n$ vertices  $\widetilde{C}_j$ and the set $\{C_j ~|~ \widetilde{C}_j \in S\}$ is a solution for {\sc X3C} for instance $X,\mathcal{C}$.
\end{proof}

\begin{theorem}
{\sc MEGS} is NP-complete for directed acyclic geodetic graphs.
\label{Theorem02}
\end{theorem}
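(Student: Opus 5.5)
The plan is to reuse the graph $\GXC$ from the proof of Theorem~\ref{Theorem01} unchanged, and to show that $\GXC$ has an edge geodetic set with at most $7n+m$ vertices if and only if $X,\mathcal{C}$ has a solution for {\sc X3C}. Membership in NP is clear, since an edge geodetic set can be verified in polynomial time by BFS from each vertex. Acyclicity, geodeticity and the polynomial-time construction were already established for $\GXC$. As before, the terminal set $R(\GXC)$, of size $6n+m$, lies in every edge geodetic set.

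\textbf{Step 1 (edges covered by terminals).} I will determine which edges are already covered by shortest paths between terminals.
\begin{itemize}
\item Each $u_j$ reaches $v_i$ (for $x_i\in C_j$) only via the unique path $u_j,\widetilde{C}_j,\widetilde{x}_i,v_i$. These paths cover all edges $(u_j,\widetilde{C}_j)$, $(\widetilde{C}_j,\widetilde{x}_i)$ and $(\widetilde{x}_i,v_i)$. Note that every $\widetilde{C}_j$ has out-edges, so it reaches some $v_i$.
\item The edges $(u_j,w_i)$ cover themselves.
\item The only terminal-to-terminal connection that could use $(\widetilde{x}_i,y_i)$ or $(y_i,w_i)$ goes from some $u_j$ to $w_i$. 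But the direct edge $(u_j,w_i)$ has length $1$, so these edges are never used.
\end{itemize}
Hence the uncovered edges are exactly the $6n$ edges $(\widetilde{x}_i,y_i)$ and $(y_i,w_i)$, for $1\le i\le 3n$.

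\textbf{Step 2 (forward direction).} Given an exact cover $\mathcal{C}'$, I take $S=R(\GXC)\cup\{\widetilde{C}_j \mid C_j\in\mathcal{C}'\}$. For $x_i\in C_j$, the path $\widetilde{C}_j,\widetilde{x}_i,y_i,w_i$ is the unique path from $\widetilde{C}_j$ to $w_i$, hence shortest. It covers both edges $(\widetilde{x}_i,y_i)$ and $(y_i,w_i)$ for each of the three elements of $C_j$.

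\textbf{Step 3 (backward direction; the main obstacle).} The key claim is that, for each index $i$, adding any single non-terminal vertex other than a $\widetilde{C}_j$ helps with the edge pairs of at most one $i$. I will argue this by cases.
\begin{itemize}
\item Adding $\widetilde{x}_i$ gives the shortest path $\widetilde{x}_i,y_i,w_i$, which covers only index $i$.
\item Adding $y_i$ gives the shortest paths $u_j\to y_i$ for $x_i\in C_j$, of length $3$ and unique, together with $y_i\to w_i$. Again only index $i$ is affected.
\item A vertex $\widetilde{C}_j$ affects at most the three indices of $C_j$.
\end{itemize}
An edge pair for index $i$ can only be covered by a path that ends at $y_i$ or $w_i$ and starts at $\widetilde{C}_j$, $\widetilde{x}_i$ or $y_i$ itself; a path starting at $u_j$ and ending at $w_i$ is never shortest, as noted in Step~1. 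Therefore, with at most $n$ non-terminal vertices we must cover all $3n$ indices, which forces exactly $n$ vertices $\widetilde{C}_j$ with pairwise disjoint sets covering $X$, that is, an exact cover.

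The delicate point is this careful case check that no shortest path from a terminal or a chosen vertex covers the edges of two different indices except through a $\widetilde{C}_j$. I expect it to go through because the only branching toward distinct $w_i$ happens at the $\widetilde{C}_j$ vertices, while the $u_j$ vertices are cut off from the $y_i$--$w_i$ edges by the shortcut edges $(u_j,w_i)$.
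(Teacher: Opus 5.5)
Your proof is correct, but it uses a different graph from the paper's. The paper does not reuse the graph of Theorem~\ref{Theorem01}. It deletes the vertices $y_i$ and replaces each path $\widetilde{x}_i,y_i,w_i$ by a single edge $(\widetilde{x}_i,w_i)$, so the only edges not covered by $R(\GXC)$ are these $3n$ bridging edges; the counting argument is then repeated. You keep the graph verbatim, identify the $6n$ uncovered edges $(\widetilde{x}_i,y_i)$ and $(y_i,w_i)$, and use the shortcut edges $(u_j,w_i)$ to rule out covering them from the $u_j$.

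What each version buys:
\begin{itemize}
\item Yours proves Theorems~\ref{Theorem01} and~\ref{Theorem02} with a single reduction. The same set $R(\GXC)\cup\{\widetilde{C}_j \mid C_j\in\mathcal{C}'\}$ is optimal for both problems.
\item The paper's variant produces instances on which $R(\GXC)$ is already a geodetic set. There the hardness comes purely from covering edges, rather than being inherited from covering the vertices $y_i$.
\item Your counting step also fills in something the paper only asserts: choosing some $\widetilde{x}_i$ or $y_i$ never helps. Each such vertex serves one index and each $\widetilde{C}_j$ at most three, so a budget of $n$ for $3n$ indices forces exactly $n$ disjoint $\widetilde{C}_j$'s.
\end{itemize}

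One small precision point. Your sentence that a covering path must start at $\widetilde{C}_j$, $\widetilde{x}_i$ or $y_i$ is not accurate for the edge $(\widetilde{x}_i,y_i)$: the path $u_j,\widetilde{C}_j,\widetilde{x}_i,y_i$ covers it when $y_i\in S$. State the claim for the edge $(y_i,w_i)$ instead. That is all the counting needs, since covering $(y_i,w_i)$ already forces a vertex of $\{\widetilde{x}_i,y_i\}\cup\{\widetilde{C}_j \mid x_i\in C_j\}$ into $S$.
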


\begin{proof}
The proof is similar to the proof of theorem \ref{Theorem01}. {\sc MEGS} is obviously in NP. We again define a directed acyclic geodetic graph $\GXC$ for an {\sc X3C} instance  $X=\{x_1,\ldots,x_{3n}\}$ and $\mathcal{C}=\{C_1,\ldots,C_m\}$ such that $G_{X,\mathcal{C}}$ has an edge geodetic set with at most $7n+m$ vertices if and only if $X,\mathcal{C}$ has a solution for {\sc X3C}. The vertices and edges of $G_{X,\mathcal{C}}$ are defined as in theorem \ref{Theorem01}. The only difference is the omission of the vertices $y_1,\ldots,v_{3n}$. These vertices are bridged with edges from the vertices $\widetilde{x}_i$ to the vertices $w_i$, see figure \ref{Figure03} for an example.
$$
V(\GXC) = \{u_1,\ldots,u_m,\,\widetilde{C}_1,\ldots,\widetilde{C}_{m},\,\widetilde{x}_1,\ldots,\widetilde{x}_{3n}\,v_1,\ldots,v_{3n},\,w_1,\ldots,w_{3n}\} \\
$$ and
$$
\begin{array}{lll}
E(\GXC) = & & \{(u_j,\widetilde{C}_j)~|~1 \leq j \leq m\} \\
& \cup & \{(u_j,w_i)~|~1 \leq j \leq m, \, 1 \leq i \leq 3n\} \\
& \cup & \{(\widetilde{C}_j,\widetilde{x}_i)~|~1 \leq j \leq m, \, x_i \in C_j\} \\
& \cup & \{(\widetilde{x}_i,v_i)~|~1 \leq i \leq 3n\} \\
& \cup & \{(\widetilde{x}_i,w_i)~|~1 \leq i \leq 3n\} \\
\end{array}
$$
It is easy to see that this graph $\GXC$ is also geodetic. 

\medskip
\begin{figure}[hbt]
\centerline{\includegraphics[width=417pt]{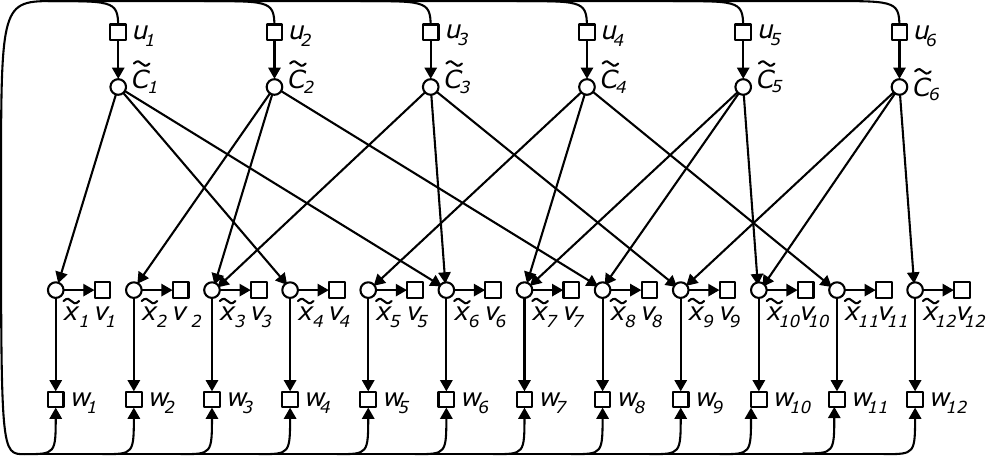}}
\caption{The directed geodetic graph $\GXC$ for $X=\{x_1,\ldots,x_{12}\}$ and $\mathcal{C}=\{C_1,\ldots,C_6\}$ with $C_1=\{x_1,x_4,x_6\}$, $C_2=\{x_2,x_3,x_8\}$, $C_3=\{x_3,x_6,x_9\}$, $C_4=\{x_5,x_7,x_{11}\}$, $C_5=\{x_7,x_8,x_{10}\}$ and $C_6=\{x_9,x_{10},x_{12}\}$.}
\label{Figure03}
\end{figure}

The only edges of $\GXC$ that are not already covered by shortest paths from terminal start vertices to terminal end vertices are the edges $(\widetilde{x}_1,w_1),\ldots,(\widetilde{x}_{3n},w_{3n})$. The argument that $\GXC$ has an edge geodetic set with at most $7n+m$ vertices if and only if $X,\mathcal{C}$ has a solution for {\sc X3C} is the same as in the proof of theorem \ref{Theorem01}.

$\Rightarrow$ Suppose there is a subset $\mathcal{C'} \subseteq \mathcal{C}$ with $n$ three-element sets such that every element of $X$ occurs in exactly one member of $\mathcal{C'}$, then vertex set  $S=R(\GXC) \cup \{ \widetilde{C}_{j,j} ~|~ C_j \in \mathcal{C'}\}$ is an edge geodetic set set with $7n+m$ vertices.

$\Leftarrow$ Conversely, let $S$ be an edge geodetic set of $\GXC$ with at most $7n+m$ vertices. Since $\GXC$ has exactly $6n+m$ terminal vertices, the set $S$ has at most $n$ non-terminal vertices. In order to cover the $3n$ edges $(\widetilde{x}_1,w_1),\ldots,(\widetilde{x}_{3n},w_{3n})$ with shortest paths, at least $n$ of the $m$ non-terminal vertices $\widetilde{C}_j$, $1 \leq j \leq m$, are necessary. Thus, $S$ has exactly $n$ vertices  $\widetilde{C}_j$ and the set $\{C_j ~|~ \widetilde{C}_j \in S\}$ is a solution for {\sc X3C} for instance $X,\mathcal{C}$.
\end{proof}

The next proposition follows from the theorems \ref{Theorem01} and \ref{Theorem02} and the property that in geodetic graphs every (edge) geodetic set is a strong (edge) geodetic set and a monitoring (edge) geodetic set.

\medskip
\begin{proposition}
{\sc MSGS}, {\sc MMoGS}, {\sc MSEGS} and {\sc MMoEGS} are NP-complete for directed acyclic geodetic graphs.
\label{Proposition01}
\end{proposition}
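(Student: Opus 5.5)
We plan to derive Proposition \ref{Proposition01} directly from Theorems \ref{Theorem01} and \ref{Theorem02}, using the equalities of Table \ref{Table02}, namely $\text{GS}(G)=\text{SGS}(G)=\text{MoGS}(G)$ and $\text{EGS}(G)=\text{SEGS}(G)=\text{MoEGS}(G)$ for every geodetic graph $G$. These equalities hold on the whole class of directed acyclic geodetic graphs. Hence on this class the decision problems {\sc MGS}, {\sc MSGS} and {\sc MMoGS} have the same yes-instances, and so do {\sc MEGS}, {\sc MSEGS} and {\sc MMoEGS}. The identity map $(G,k)\mapsto(G,k)$ is therefore a polynomial-time reduction from {\sc MGS} to {\sc MSGS} and to {\sc MMoGS}, and from {\sc MEGS} to {\sc MSEGS} and to {\sc MMoEGS}, restricted to directed acyclic geodetic graphs. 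NP-hardness then follows from Theorems \ref{Theorem01} and \ref{Theorem02}.

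The one step that needs care is justifying the equalities themselves. We would reprove them briefly rather than rely on the summary table. In a geodetic graph, the shortest path from $u$ to $v$ is unique whenever it exists. Take a geodetic set $S$ and a vertex $w$ covered by a shortest path $p_{u,v}$ with $u,v\in S$. Then $w$ lies on all shortest paths from $u$ to $v$, because $p_{u,v}$ is the only one. So $S$ is a monitoring geodetic set, and therefore also a strong geodetic set by the inclusions $\text{MoGS}(G)\subseteq\text{SGS}(G)\subseteq\text{GS}(G)$ established earlier. The same argument works verbatim for edges.

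Membership in NP is the remaining point. We would note that it is immediate here because the graph is geodetic. For {\sc MSGS}/{\sc MSEGS} in general a certificate would include the path family $\mathcal P$. Here, however, a candidate set $S$ can be checked directly: compute BFS shortest paths between all pairs in $S$ (each unique), and test whether their union covers $V(G)$ (respectively $E(G)$). Since the equalities make all three notions coincide, this one check decides strong and monitoring membership as well. I expect no real obstacle. The only subtlety is stating explicitly that the reduction stays within the restricted graph class, and it does, because the instance graph is unchanged.
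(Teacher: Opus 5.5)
Your proposal is correct and takes the same route as the paper: it derives the proposition from Theorems~\ref{Theorem01} and~\ref{Theorem02} via the coincidences $\text{GS}(G)=\text{SGS}(G)=\text{MoGS}(G)$ and $\text{EGS}(G)=\text{SEGS}(G)=\text{MoEGS}(G)$ on geodetic graphs, with NP membership by polynomial-time verification. Your short re-derivation of these equalities from uniqueness of shortest paths, and your BFS-based verifier, spell out what the paper states only as remarks.
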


\section{Planar geodetic graphs}

Next we show the NP-completeness of {\sc MGS} for directed acyclic planar geodetic graphs by a polynomial time reduction from {\sc Planar 3-Sat}. The NP-completeness of {\sc Planar 3-Sat} was first shown by Lichtenstein \cite{Lic1982}.

Let $X=\{x_1,\ldots,x_n\}$ be a set of {\em variables} and $L=\{x_1,\overline{x_1},\ldots,x_n,\overline{x_n}\}$ be the {\em literals} of the variables of $X$. A truth assignment of the variables is a mapping $f:X \to \{\text{true},\text{false}\}$. Literal $x_i$ is {\em true under} $f$ if $f(x_i) = \text{true}$, it is {\em false under} $f$ if $f(x_i) = \text{false}$. Literal $\overline{x_i}$ is {\em true under} $f$ if $f(x_i) = \text{false}$, it is {\em false under} $f$ if $f(x_i) = \text{true}$. A {\em clause} $C \subseteq L$ is a set of literals. A truth assignment $f$ satisfies a clause $C_j$ if $C_j$ has a literal that is true under $f$.

For a set of variables $X$ and a set of clauses $\mathcal{C}=\{C_1,\ldots,C_m\}$, the variable-clause graph for $X,\mathcal{C}$ is the bipartite undirected graph with vertex set $\{\widetilde{x}_1,\ldots,\widetilde{x}_n\} \cup \{\widetilde{C}_1,\ldots,\widetilde{C}_n\}$ and an edge between variable vertex $\widetilde{x}_i$ and clause vertex $\widetilde{C}_j$ if and only if clause $C_j$ contains a literal for variable $x_i$. See figure \ref{Figure04} for an example.

\begin{figure}[hbt]
\centerline{\includegraphics[width=115pt]{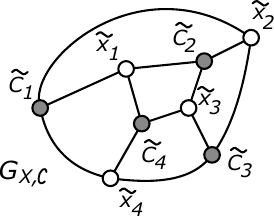}}
\caption{The bipartite planar variable-clause graph for $X = \{x_1,x_2,x_3,x_4\}$ and $\mathcal{C}=\{C_1,C_2,C_3,C_4\}$ with $C_1=\{x_1,x_2,\overline{x_4}\}$, $C_2=\{x_1,\overline{x_2},x_3\}$, $C_3=\{\overline{x_2},x_3,x_4\}$ and $C_4=\{\overline{x_1},\overline{x_3},x_4\}$. The white-colored vertices are the vertices of the variables. The gray-colored vertices are the vertices of the clauses.}
\label{Figure04}
\end{figure}

The decision problem {\sc Planar 3-Sat} is defined as follows.

\DecisionProblem{Planar 3-Sat}{A set of variables $X=\{x_1,\ldots,x_n\}$ and a set of clauses $\mathcal{C}=\{C_1,\ldots,C_m\}$ over $X$ such that each clause $C_j \in \mathcal{C}$ has exactly three literals and the variable-clause graph $\GXC$ for $X,\mathcal{C}$ is planar.}
{Is there a truth assignment $f:X \to \{\text{true},\text{false}\}$ that satisfies every clause $C_j \in \mathcal{C}$.}

\begin{theorem}
{\sc MGS} is NP-complete for directed acyclic geodetic planar graphs.
\label{Theorem03}
\end{theorem}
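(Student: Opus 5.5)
We reduce from {\sc Planar 3-Sat}, along the lines of the proof of Theorem \ref{Theorem01}. The graph will consist of forced terminal vertices plus a few non-terminal ``uncovered'' vertices that can only be covered if suitable literal vertices are added to the geodetic set. The budget will be $k=|R(G)|+n$. We may assume that no clause contains two literals of the same variable, so the variable-clause graph is simple.

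\textbf{Construction.} Fix a planar embedding of the variable-clause graph. For every variable $x_i$ we place, near the position of $\widetilde{x}_i$, two \emph{literal vertices} $T_i$ and $F_i$, each with its own terminal start vertex: edges $(s_i^T,T_i)$ and $(s_i^F,F_i)$. Each literal vertex gets a private terminal end vertex: edges $(T_i,r_i^T)$ and $(F_i,r_i^F)$. This ensures $T_i$ and $F_i$ are covered by $s^T_i\to T_i\to r^T_i$ whether or not they are chosen. A \emph{choice vertex} $z_i$ receives edges $(T_i,z_i)$ and $(F_i,z_i)$ and has the single edge $(z_i,t_i)$ to a terminal end $t_i$. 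For every clause $C_j$ we add a \emph{clause vertex} $c_j$ with a terminal end $e_j$ and edge $(c_j,e_j)$. We add $(T_i,c_j)$ if $x_i\in C_j$ and $(F_i,c_j)$ if $\overline{x_i}\in C_j$.

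The decisive ingredient is a set of \emph{bypasses}. For every terminal start $s\in\{s_i^T,s_i^F\}$ and every terminal end $t\in\{t_i\}\cup\{e_j\}$ reachable from $s$, we add a fresh vertex $b_{s,t}$ with edges $(s,b_{s,t})$ and $(b_{s,t},t)$. Then $d(s,t)=2$ is realised only by the bypass, whereas every path $s\to T_i\to z_i\to t_i$ or $s\to T_i\to c_j\to e_j$ has length $3$. The bypass vertices themselves are covered by the terminal pair $s,t$.

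\textbf{Verification of the structural properties.}
\begin{itemize}
\item Acyclicity is immediate from the layering starts $\to$ literals/bypasses $\to$ $z_i,c_j$ $\to$ ends.
\item For planarity, $T_i,F_i,z_i$ and their private starts, ends and bypasses form a small planar gadget around the position of $\widetilde{x}_i$. Each edge $T_i c_j$ or $F_i c_j$ is drawn along the embedded edge $\widetilde{x}_i\widetilde{C}_j$. The corresponding bypass $s\to b_{s,e_j}\to e_j$ runs parallel to it in the same channel, with $e_j$ placed next to $c_j$.
\item For the geodetic property, every pair of vertices has at most one shortest path. The only potential duplicates would be two length-$2$ paths with the same endpoints, for example $s\to T_i\to c_j$ and $s\to b\to c_j$. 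These cannot occur, because bypasses end at terminal ends and each start has exactly one literal successor.
\end{itemize}

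\textbf{Coverage analysis.} Every vertex except the $z_i$ and the $c_j$ lies on a shortest path between two terminal vertices. Moreover, $z_i$ is covered by $S\supseteq R(G)$ if and only if $S$ contains $T_i$, $F_i$ or $z_i$, because of the bypasses. Likewise $c_j$ is covered if and only if $S$ contains $c_j$ or the literal vertex of a literal in $C_j$; for instance $T_i\to c_j\to e_j$ is then the unique shortest path.

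\textbf{Correctness of the reduction.}
\begin{itemize}
\item[$\Rightarrow$] A satisfying assignment $f$ yields the set $R(G)\cup\{T_i: f(x_i)=\text{true}\}\cup\{F_i: f(x_i)=\text{false}\}$, which has $|R(G)|+n$ vertices and is geodetic.
\item[$\Leftarrow$] Let $S$ be a geodetic set with $|S|\le|R(G)|+n$. The sets $\{T_i,F_i,z_i\}$ are pairwise disjoint and each must meet $S$, so $S$ contains exactly one vertex of each and no $c_j$. If $z_i\in S$, we replace it by $T_i$; this still covers $z_i$ and covers at least as much as before. Now each $c_j$ must be covered by a chosen literal vertex of $C_j$. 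Setting $f(x_i)=\text{true}$ if and only if $T_i\in S$ therefore satisfies every clause.
\end{itemize}

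\textbf{Main obstacle.} I expect the hardest part to be doing the bypasses simultaneously for planarity and geodeticity. Each start needs a bypass to up to four ends (its $t_i$ and the clauses of its literal), and all of these must be routed in the embedding without crossing the clause edges of neighbouring variables. My first attempt is to route each bypass inside the face-channel of its own variable-clause edge. If that causes crossings at $t_i$, I would give each literal vertex its own copy of $z_i$ by means of a subdivided fan, keeping each gadget a tree-like planar piece.
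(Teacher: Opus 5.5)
\textbf{The gap is planarity.} Your coverage analysis and the counting in both directions are correct for the graph you build. The planarity claim, however, is false, and this is not a routing issue: the graph is non-planar for legal {\sc Planar 3-Sat} instances.

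Take $C_1=\{x_1,x_2,x_3\}$ and $C_2=\{\overline{x_1},x_2,x_3\}$, whose variable-clause graph is $K_{2,3}$. In your graph, $\{c_1,e_1,c_2\}$ and $\{T_1,T_2,T_3\}$ are the branch vertices of a subdivided $K_{3,3}$. The connections are:
\begin{itemize}
\item the edges $c_1T_k$;
\item the paths $e_1\,b_{s_k^T,e_1}\,s_k^T\,T_k$ for $k=1,2,3$;
\item the path $c_2F_1z_1T_1$;
\item the edges $c_2T_2$ and $c_2T_3$.
\end{itemize}

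The reason is that the single terminal end $e_j$, together with the three bypasses into it, duplicates the clause vertex: both $c_j$ and $e_j$ are joined to all three literal sides. In a small disk around the clause, each of your three channels brings one strand to $c_j$ and one to $e_j$. The three rays of $c_j$ cut the disk into three sectors, and $e_j$ can reach only one of them. So the ``bypass parallel in the same channel'' drawing you describe cannot exist.

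There is a second, independent obstruction at the variables. A single pair $T_i,F_i$ joined through $z_i$ cannot serve occurrences whose signs alternate around $\widetilde{x}_i$. For example, take $C_1=\{x,y_1,y_4\}$, $C_2=\{\overline{x},y_1,y_2\}$, $C_3=\{x,y_2,y_3\}$, $C_4=\{\overline{x},y_3,y_4\}$, whose variable-clause graph is a wheel. Your graph contains the cycle $c_1T_{y_1}c_2T_{y_2}c_3T_{y_3}c_4T_{y_4}c_1$ with the interleaving chords $c_1T_xc_3$ and $c_2F_xc_4$, joined by $T_xz_xF_x$. This is again a subdivided $K_{3,3}$. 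Your fallback of giving $z_i$ several copies via a fan addresses neither obstruction.

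\textbf{What is missing.} As it stands, your reduction only reproves hardness for acyclic geodetic graphs (the content of Theorem~\ref{Theorem01}). The two ideas the paper uses to obtain planarity are exactly the ones you lack.

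First, every occurrence of $x_i$ gets its own port $(T^{(k)},O^{(k)},F^{(k)})$ in a copy $J_k$ of $J$. The copies are chained cyclically so that the $2r$ uncovered vertices $u_3^{(k)},u_5^{(k)}$ and the $2r$ candidates $u_4^{(k)},u_7^{(k)}$ form an even cycle, in which each candidate covers two consecutive uncovered vertices. The only minimum choices are then all $u_4^{(k)}$ or all $u_7^{(k)}$. This enforces a consistent truth value without a shared literal vertex, and allows ports of either sign in any cyclic order.

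Second, the clause vertex $w$ has three terminal ends $\hat{I}_1,\hat{I}_2,\hat{I}_3$, one in the middle of each clause port on the outer face. Each incoming terminal start $O^{(k)}$ therefore gets its strictly shorter routes locally, through the edges to $v_{l,1}$, $\hat{I}_l$ and $v_{l',1}$. The clause gadget stays a planar piece whose ports appear in the cyclic order of the embedding.
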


\begin{proof}
{\sc MGS} in NP because for a given set of vertices $S$ it is easy to check that $S$ is a geodetic set.

We now show how to create, in polynomial time, a directed acyclic planar geodetic graph $\GXC$ for a given {\sc Planar 3-Sat} instance $X,\mathcal{C}$ with $m$ clauses. The Graph $\GXC$ has a geodetic set containing at most $29 m$ vertices if and only if there is a truth assignment $f:X \to \{\text{true},\text{false}\}$ that satisfies every clause $C_j \in \mathcal{C}$.

The procedure is as follows. We design a gadget graph $\GX{r}$ for the variables and a gadget graph $\GC$ for the clauses.  Let $r_i$ be the total number of literals for variable $x_i$ in all clauses $C_1,\ldots,C_m$. Then in graph $\GXC$ there is a copy $G_{x_i}$ of gadget $\GX{r_i}$ for each variable $x_i \in X$ and a copy $G_{C_j}$ of gadget $\GC$ for each clause $C_j \in \mathcal{C}$. Copy $G_{x_i}$ is connected to copy $G_{C_j}$ via directed edges from vertices of $G_{x_i}$ to vertices of $G_{C_j}$ if and only if clause $C_j$ contains a literal for variable $x_i$. The final resulting graph $\GXC$ will be acyclic planar and geodetic and has a geodetic set with at most $29 m$ vertices if and only if there is a truth assignment $f:X \to \{\text{true},\text{false}\}$ for $X$ that satisfies every clause $C_j \in \mathcal{C}$.

\medskip
\begin{itemize}
\item {\bf The definition of variable gadget $\GX{r}$:}

\medskip
Let $J$ be the graph with vertex set $V(J)=\{T,F,O,I_1,\ldots,I_6,u_1,\ldots,u_7\}$ and edge set
$$
E(J)=\left\{
\begin{array}{l}
(O,T),(O,F),(O,u_4),(O,u_7),
(T,I_1),\\
(u_1,T),(u_1,I_2),\\
(u_2,F),(u_2,I_3),\\
(u_3,I_1),\\
(u_4,u_1),(u_4,u_3),(u_4,u_5),(u_4,I_5),\\
(u_5,I_5),(u_5,I_6),\\
(u_6,u_5),(u_6,I_4),\\(u_7,u_2),(u_7,u_6),(u_7,I_6)
\end{array}\right\},
$$
see also figure \ref{Figure05}.

\medskip
\begin{figure}[hbt]
\centerline{\includegraphics[width=205pt]{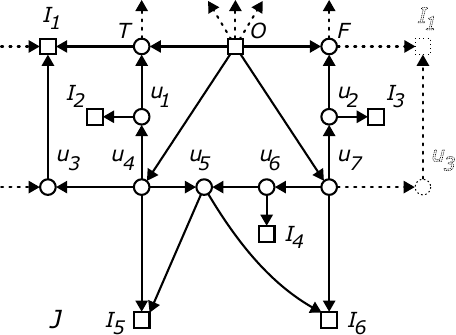}}
\caption{The graph $J$.}
\label{Figure05}
\end{figure}

\medskip
Variable gadget $\GX{r}$ is composed from $r$ copies $J_1,\ldots,J_r$ of graph $J$ in a circular manner as shown in figure \ref{Figure06}. Let
$$T^{(k)},F^{(k)},O^{(k)},I_1^{(k)},\ldots,I_6^{(k)},u_1^{(k)},\ldots,u_7^{(k)}$$
be the vertices 
$$T,F,O,I_1,\ldots,I_6,u_1,\ldots,u_7$$
of the $k$-th copy $J_k$, $1 \leq k \leq r$, in $\GX{r}$ and
let
$$
\begin{array}{lll}
T^{(\star)} & = & \{T^{(k)}~|~1 \leq k \leq r\}, \\
F^{(\star)} & = & \{F^{(k)}~|~1 \leq k \leq r\}, \\
O^{(\star)} & = & \{O^{(k)}~|~1 \leq k \leq r\}, \\
u_3^{(\star)} & = & \{u_3^{(k)}~|~1 \leq k \leq r\}, \\
u_4^{(\star)} & = & \{u_4^{(k)}~|~1 \leq k \leq r\}, \\
u_5^{(\star)} & = & \{u_5^{(k)}~|~1 \leq k \leq r\}, \\
u_7^{(\star)} & = & \{u_7^{(k)}~|~1 \leq k \leq r\}
\end{array}
$$
The $r$ copies of $J$ are connected by additional edges. These are the edges from $F^{(k)}$ to $I^{(k')}_1$ and the edges from $u^{(k)}_7$ to $u^{(k')}_3$ where $k' = (k \bmod r)+1$. 

\begin{figure}[hbt]
\centerline{\includegraphics[width=412pt]{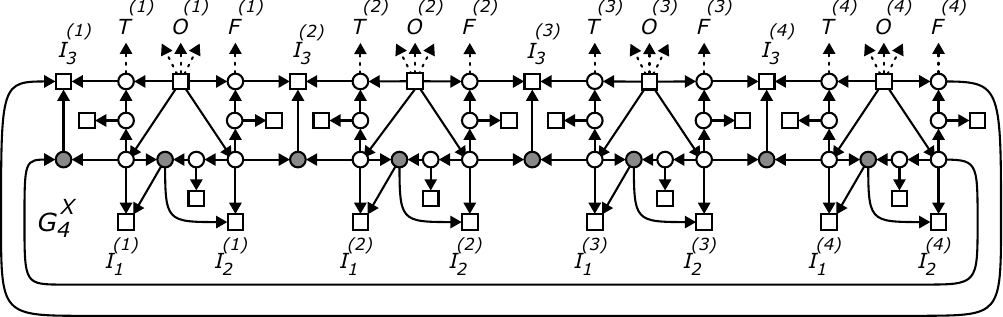}}
\caption{The figure shows variable gadget $\GX{4}$. It is used in the case where there are exactly four literals for a variable $x_i$ in all clauses together. The gray-colored vertices are the vertices not covered by shortest paths from a terminal start to a terminal end vertex.}
\label{Figure06}
\end{figure}

\medskip
The dashed lines in figure \ref{Figure06} show the vertices and edges of the neighboring copies of $J$ in gadget $\GX{r}$ and the possible outgoing edges to a copy of a clause gadget $\GC$ in $\GXC$. The dashed lines in figure \ref{Figure06} shows the possible outgoing edges from vertices of a copy of $\GX{r}$ to vertices of a copy of a clause gadget. Only the vertices of $O^{(\star)} \cup T^{(\star)} \cup F^{(\star)}$ will get additional outgoing edges in $\GXC$.

\medskip
Graph $J$ has one terminal start vertex $O$ and 6 terminal end vertices $I_1,\ldots,I_6$. Thus variable gadget $\GX{r}$ has $r$ terminal start vertices and $6 r$ terminal end vertices. The terminal start vertices of the copies of $\GX{r}$ will also be terminal start vertices in $\GXC$, because there will be no additional edges in $\GXC$ to the vertices of these copies. The terminal end vertices of the copies of $\GX{r}$ will also be terminal end vertices in $\GXC$, because there will be no additional edges in $\GXC$ from these vertices of these copies.

\medskip
It is easy to check that the shortest paths from the terminal start vertices to the terminal end vertices of $\GX{r}$ cover all vertices of $\GX{r}$ except the $2 r$ vertices of $u_3^{(\star)} \cup u_5^{(\star)}$. These vertices are colored gray in figure \ref{Figure06}.

\medskip
It is also easy to check that variable gadget $\GX{r}$ is acyclic, planar and geodetic, and has exactly two minimum geodetic sets $R(\GX{r}) \cup u_4^{(\star)}$ and $R(\GX{r}) \cup u_7^{(\star)}$ with $8 r$ vertices. The first set is called a minimum geodetic set of type {\em true} and the second set is called a minimum geodetic set of type {\em false}.

\medskip
Another interesting property of the variable gadget $\GX{r}$ is the following. For each $k$, $1 \leq k \leq r$, there is

\begin{enumerate}
\item a path from vertex $u_4^{(k)}$ to vertex of $T^{(k)}$ and no path from $u_4^{(k)}$ to a vertex of $(T^{(\star)} \cup F^{(\star)}) \setminus \{T^{(k)}\}$ and
\item a path from vertex $u_7^{(k)}$ to vertex $F^{(k)}$ and no path from $u_7^{(k)}$ to a vertex of $(T^{(\star)} \cup F^{(\star)}) \setminus \{F^{(k)}\}$.
\end{enumerate}

\medskip
The vertex triples $(T^{(k)},O^{(k)},F^{(k)})$, $1 \leq k \leq r$, are called the {\em variable ports} of $\GX{r}$. For each $k$, $1 \leq k \leq r$, either the two vertices $T^{(k)},O^{(k)}$ or the two vertices $O^{(k)},F^{(k)}$ will be connected in $\GXC$ via directed edges to vertices from a clause gadget $\GC$.

\bigskip
\item {\bf The definition of clause gadget $\GC$:}

\medskip
The clause gadget $\GC$ has vertex set $$V(\GC)=\{\hat{I}_1,\,\hat{I}_2,\,\hat{I}_3,\,\hat{I}_4,\,\hat{I}_5,\,v_{1,1},\,v_{2,1},\,v_{2,2},\,v_{3,1},\,v_{3,2},\,v_{3,3},\,w\}$$
and edge set

$$E(\GC) = \left\{
\begin{array}{l}
(v_{1,1},\hat{I}_3),(v_{1,1},\hat{I}_1),(v_{2,1},\hat{I}_1), (v_{2,1},\hat{I}_2),(v_{3,1},\hat{I}_2),(v_{3,1},\hat{I}_3),\\
(v_{1,1},w),\\
(v_{2,1},v_{2,2}),(v_{2,2},w),(v_{2,2},\hat{I}_5),\\
(v_{3,1},v_{3,2}),(v_{3,2},v_{3,3}),(v_{3,3},w),(v_{3,3},\hat{I}_4), \\
(w,\hat{I}_1),(w,\hat{I}_2),(w,\hat{I}_3)
\end{array}
\right\},$$
see figure \ref{Figure07}. The vertices $\hat{I}_1, \ldots, \hat{I}_5$ are terminal end vertices in the clause gadget and also in the final graph $\GXC$, because there are no outgoing edges from vertices of a copy of a clause gadget in $\GXC$. The dashed lines in figure \ref{Figure07} indicate all possible incoming edges from vertices of a copy of a variable gadget in $\GXC$.

\begin{figure}[hbt]
\centerline{\includegraphics[width=213pt]{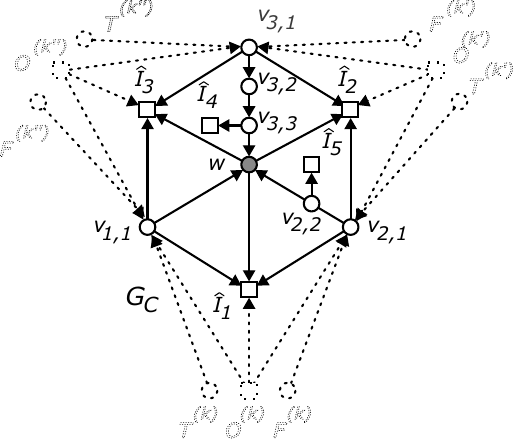}}
\caption{The clause gadget $\GC$.}
\label{Figure07}
\end{figure}

\medskip
The planar embedding given in figure \ref{Figure07} defines in a circle around to the left along the outer face three vertex triples $(v_{1,1},\hat{I}_1,v_{2,1})$, $(v_{2,1},\hat{I}_2,v_{3,1})$ and $(v_{3,1},\hat{I}_3,v_{1,1})$. These vertex triples are called {\em clause ports}.

\bigskip
\item {\bf The definition of graph $\GXC$:}

\medskip
The final graph $\GXC$ has for each variable $x_i \in X$ a copy $G_{x_i}$ of variable gadget $\GX{r_i}$ and for each clause $C_j \in \mathcal{C}$ a copy $G_{C_j}$ of clause gadget $\GC$.

\medskip
Figure \ref{Figure08} shows the connections between the variable graphs $G_{x_i}$ and the clause graphs $G_{C_j}$.

\medskip
If clause $C_j$ contains a literal for variable $x_i$ then 4 edges are inserted from the vertices of a clause port $(T^{(k)},O^{(k)},F^{(k)})$ of $G_{x_i}$ to the vertices of a variable port $(v_{l,1},\hat{I}_l,v_{l'.1})$ of $G_{C_j}$. When choosing the variable port and clause port, the planar embedding of the variable clause graph of the {\sc Planar-3-Sat} instance $X,\mathcal{C}$ must be taken into account.

\medskip
If literal $x_i \in C_j$ then the 4 edges $$(T^{(k)},v_{l,1}),~(O^{(k)},v_{l,1}),~(O^{(k)},\hat{I}_{l,1}),~(O^{(k)},v_{l',1})$$ are inserted, if literal $\overline{x_i} \in C_j$ then the 4 edges $$(O^{(k)},v_{l,1}),~(O^{(k)},\hat{I}_{l,1}),~(O^{(k)},v_{l',1}),~(F^{(k)},v_{l',1})$$ are inserted.

\medskip
\begin{figure}[hbt]
\centerline{\includegraphics[width=410pt]{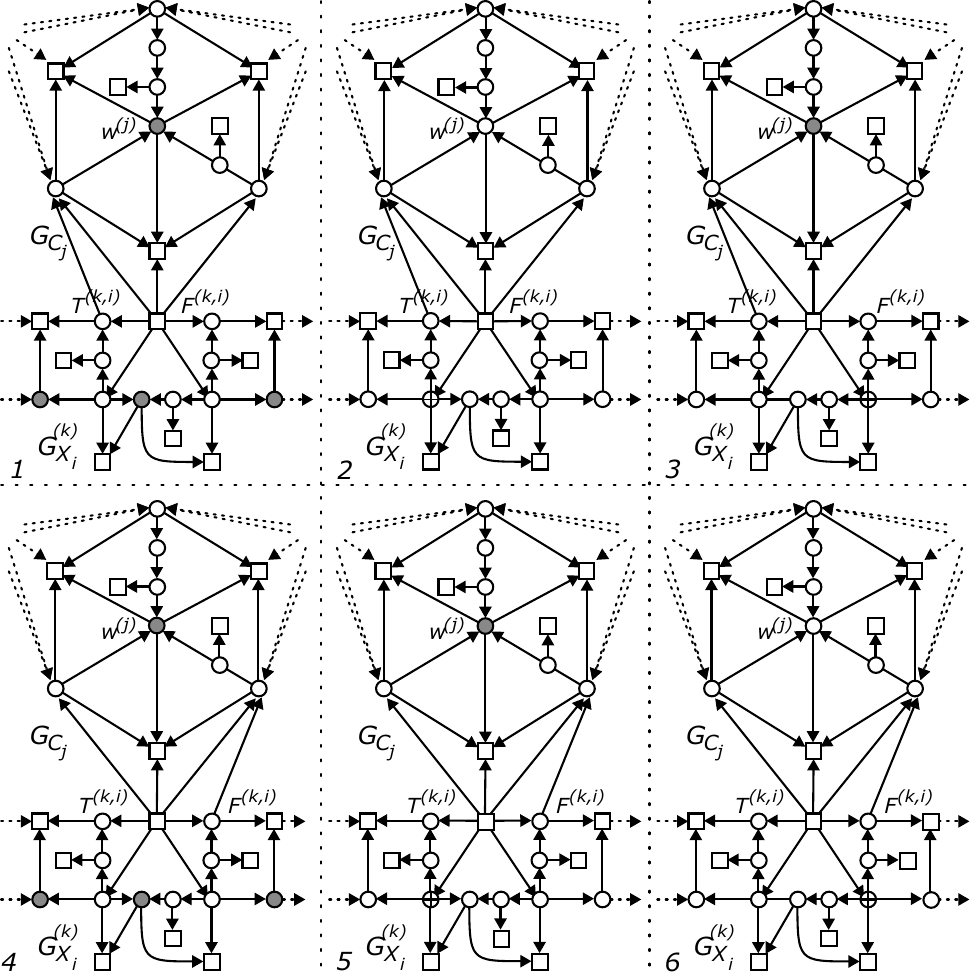}}
\caption{The possible edges from a variable graph $G_{x_i}$ to a clause graph $G_{C_j}$.}
\label{Figure08}
\end{figure}

\begin{itemize}
\item 
The top row of figure \ref{Figure08} (tiles 1, 2 and 3) shows the case in which literal $x_i$ is in $C_j$. In this case there is an edge from vertex $T^{(k)}$ to vertex $v_{l,1}$ of $G_{C_j}$.
\item
The bottom row of figure \ref{Figure08} (tiles 4, 5 and 6) shows the case in which literal $\overline{x_i}$ is in $C_j$.In this case there is an edge from vertex $F^{(k)}$ to vertex $v_{l',1}$.
\item
In the first column of figure \ref{Figure08} (tiles 1 and 4) the vertices not covered by $R(\GXC)$ are colored gray.
\item
In the second column of figure \ref{Figure08} (tiles 2 and 5), the vertices not covered by a minimum geodetic set of type true for $G_{x_i}$ are colored gray.
\item
In the third column of figure \ref{Figure08} (tiles 3 and 6), the vertices not covered by a minimum geodetic set of type false for $G_{x_i}$ are colored gray.
\end{itemize}
In the first case where literal $x_i \in C_j$, vertex $w$ from $G_{C_j}$
is covered by the geodetic set of type true.
This is because there is a shortest path from $u_4^{(k)}$ via $u_1^{(k)}$ to $T^{(k)}$ in the copy of $J$, see figure \ref{Figure05}, and then either via the vertices $v_{1,1}$ and $w$ to vertex $\hat{I}_{2}$, or via the vertices $v_{2,1}$, $v_{2,2}$ and $w$ to vertex $\hat{I}_{3}$ or via the vertices $v_{3,1}$, $v_{3,2}$, $v_{3,3}$ and $w$ to vertex $\hat{I}_{1}$, respectively, see figure \ref{Figure08}. 
Which cases actually occur depends on which clause port is connected to which variable port.
 
\medskip
In the second case where literal $\overline{x_i} \in C_j$, vertex $w$ from $G_{C_j}$ is covered by the geodetic set of type false.
This is because there is a shortest path from $u_7^{(k)}$ via $u_2^{(k)}$ to $F^{(k)}$ in the copy of $J$, see figure \ref{Figure05}, and then either via the vertices $v_{1,1}$ and $w$ to vertex $\hat{I}_{2}$, or via the vertices $v_{2,1}$, $v_{2,2}$ and $w$ to vertex $\hat{I}_{3}$ or via the vertices $v_{3,1}$, $v_{3,2}$, $v_{3,3}$ and $w$ to vertex $\hat{I}_{1}$, respectively, see figure \ref{Figure08}.

\medskip
Graph $\GXC$ is acyclic, planar and geodetic.

\end{itemize}

It remains to show that if there is a truth assignment $f:X \to \{\text{true},\text{false}\}$ that satisfies every clause $C_j \in \mathcal{C}$ then $\GXC$ has a geodetic set with at most $29 m$ vertices and if $\GXC$ has a geodetic set with at most $29 m$ vertices then there is a truth assignment $f:X \to \{\text{true},\text{false}\}$ that satisfies every clause $C_j \in \mathcal{C}$.

$\Rightarrow$ Suppose $f:X \to \{\text{true},\text{false}\}$ is a truth assignment for $X$ that satisfies every clause $C_j \in \mathcal{C}$. Let $S_{x_i}$ be a minimum geodetic set of copy $G_{x_i}$ of type true if $f(x_i) = \text{true}$, or of type false if $f(x_i) = \text{false}$. Then the union of all $S_{x_i}$, $x_i \in X$, and the the set of all terminal vertices of all $G_{C_j}$ is a geodetic set with $29 m$ vertices.

$\Leftarrow$ Suppose $S$ is a geodetic set of $\GXC$ with at most $29 m$ vertices. Let
$$u_{3}^{(k,i)},u_{4}^{(k,i)},u_{5}^{(k,i)},u_{7}^{(k,i)},$$
$1 \leq k \leq r_i$, be the vertices
$$u_{3}^{(k)},u_{4}^{(k)},u_{5}^{(k)},u_{7}^{(k)}$$
of gadget $\GX{r_i}$ in graph $G_{x_i}$ of $\GXC$, and let $w^{(j)}$, $1 \leq j \leq m$, be the vertex $w$ of gadget $\GC$ in graph $G_{C_j}$ of $\GXC$.
Vertex set $S$ must contain the $7 \cdot 3 m = 21 m$ terminal vertices of all $G_{x_i}$, $x_i \in X$, and the $5 \cdot m$ terminal vertices of all $G_{C_j}$, $C_j \in \mathcal{C}$. In addition, $S$ must contain $1 \cdot 3 m$ vertices from the set
$$\{u_{4}^{(k,i)},u_{7}^{(k,i)}~|~1 \leq k \leq r_i, 1 \leq i \leq n\}$$
to cover all vertices $u_{3}^{(k,i)}$ and $u_{5}^{(k,i)}$ for $1 \leq k \leq r_i$ and $1 \leq i \leq n$.
More precisely, $S$ must contain for each $x_i \in X$ all vertices $u_{4}^{(k,i)}$ or all vertices $u_{7}^{(k,i)}$, $1 \leq k \leq r_i$. Since $S$ covers all $m$ vertices $w^{(j)}$ of $G_{C_j}$, $C_j \in \mathcal{C}$, the truth assignment $f:X \to \{\text{true},\text{false}\}$ defined by
$$f(x_i) = \left\{
\begin{array}{ll}
\text{true}  & \text{if } S \cap V(G_{x_i}) \text{ is a minimum geodetic set of } G_{x_i} \text{ of type true}\\
\text{false} & \text{if } S \cap V(G_{x_i}) \text{is a minimum geodetic set of } G_{x_i} \text{ of type false}
\end{array}
\right.$$
satisfies every clause $C_j \in \mathcal{C}$.

The transformation can obviously be done in polynomial time.
\end{proof}

The following proposition is a consequence of theorem \ref{Theorem03} and the property that in geodetic graphs a set of vertices is geodetic if and only if it is strong geodetic if and only if it is monitoring geodetic.

\begin{proposition}
{\sc MSGS} and {\sc MMoGS} are NP-complete for directed acyclic geodetic planar graphs.
\label{Proposition02}
\end{proposition}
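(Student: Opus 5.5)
The plan is to derive Proposition \ref{Proposition02} directly from Theorem \ref{Theorem03}. I will use the identity $\text{GS}(G)=\text{SGS}(G)=\text{MoGS}(G)$, which holds for every geodetic directed graph (Table \ref{Table02}). I treat membership in NP and NP-hardness separately.

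\emph{Membership in NP.} Take a candidate set $S$ with $|S| \leq k$. I verify it as follows.
\begin{itemize}
\item For every ordered pair $u,v \in S$, compute the distances from $u$ by breadth-first search.
\item For every vertex $w$, decide whether $w$ lies on all shortest paths from $u$ to $v$. Equivalently, check that $d(u,w)+d(w,v)=d(u,v)$ and that removing $w$ strictly increases the distance from $u$ to $v$ (or disconnects $v$); the trivial case $w\in\{u,v\}$ is handled directly. This gives polynomial-time verification for {\sc MMoGS}.
\item For {\sc MSGS}, the certificate consists of $S$ together with a family $\mathcal P$ containing at most one path per ordered pair in $S$. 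Hence $|\mathcal P| \leq |S|^2$, each path has polynomial length, and one checks that each path is shortest and that the union of the paths covers $V(G)$.
\end{itemize}
On the restricted class (acyclic planar geodetic) a simpler argument is also available. By the identity above, it suffices to check the geodetic property itself. Each vertex $w$ is then tested by asking whether $d(u,w)+d(w,v)=d(u,v)$ for some $u,v\in S$.

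\emph{NP-hardness.} I use the reduction from Theorem \ref{Theorem03} unchanged. Given a {\sc Planar 3-Sat} instance $X,\mathcal{C}$, it builds in polynomial time the directed acyclic planar geodetic graph $\GXC$ with threshold $k=29m$. Because $\GXC$ is geodetic, a vertex set $S$ is a geodetic set of $\GXC$ iff it is a strong geodetic set iff it is a monitoring geodetic set.
\begin{itemize}
\item If $S$ is strong geodetic, it is geodetic, since $\text{SGS}\subseteq\text{GS}$ always holds.
\item If $S$ is geodetic, then each vertex $w$ lies on some shortest $u$--$v$ path with $u,v\in S$. This path is the unique shortest $u$--$v$ path, so $w$ lies on all shortest $u$--$v$ paths, and $S$ is monitoring geodetic. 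Taking that unique path for each such pair gives a family $\mathcal P$ with at most one path per pair, so $S$ is also strong geodetic.
\end{itemize}
Consequently $\GXC$ has a strong (respectively monitoring) geodetic set of size at most $29m$ iff it has a geodetic set of size at most $29m$. By Theorem \ref{Theorem03}, this holds iff $X,\mathcal{C}$ is satisfiable. The same map is therefore a polynomial-time reduction to both {\sc MSGS} and {\sc MMoGS} restricted to directed acyclic geodetic planar graphs.

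The only point needing care is that the equalities of Table \ref{Table02} really hold in the directed setting. In particular, the strong-set condition allows at most one path per pair, and uniqueness of shortest paths makes that restriction harmless. Once this is checked, the proposition follows immediately from Theorem \ref{Theorem03}.
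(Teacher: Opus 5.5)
Your proposal is correct and follows the paper's route. The paper derives this proposition in one line from Theorem~\ref{Theorem03}, using the fact that on geodetic graphs a vertex set is geodetic iff it is strong geodetic iff it is monitoring geodetic. You fill in details the paper leaves implicit: you check that this equivalence holds for directed geodetic graphs, and you give an explicit NP-membership argument, including the path family as the certificate for {\sc MSGS}, which is needed on general inputs.
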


Figure \ref{Figure09} shows a complete example of such a Graph $\GXC$ constructed based on a planar 3-SAT instance. The structure of the corresponding planar embedded variable-clause graph remains clearly visible.

\begin{figure}[hbt]
\centerline{\includegraphics[width=410pt]{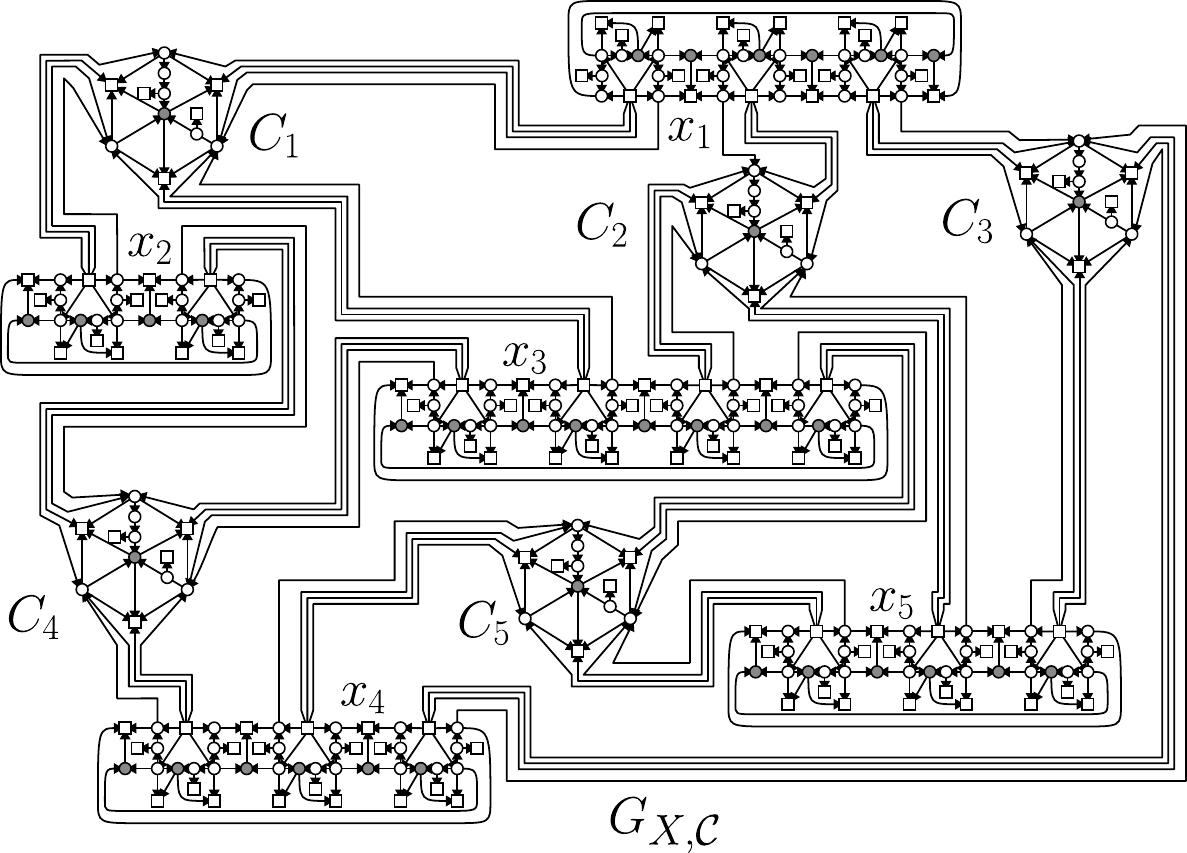}}
\caption{Example of Graph $\GXC$ derived from the planar 3-SAT instance $X = \{x_1,x_2,x_3,x_4,x_5\}$ and $\mathcal{C} =\{C_1,C_2,C_3,C_4,C_5\}$ with $C_1=(x_1,\overline{x_2},\overline{x_3 })$, $C_2=(\overline{x_1},\overline{x_3},\overline{x_5})$, $C_3=(x_1,\overline{x_4},x_5)$, $C_4=(x_2,x_3,x_4)$ and $C_5=(x_3,x_4,\overline{x_5})$}
\label{Figure09}
\end{figure}

\section{Directed Series Parallel Graphs}

Next, we show that minimum geodetic sets and minimum edge geodetic sets can be computed in linear time for {\em directed series-parallel graphs} which we also call {\em series-parallel digraphs}. Series-parallel graphs (in their undirected version) have a tree width of at most 2, but not all of them are outerplanar. They are recursively defined as follows.

\begin{definition}[series-parallel diraphs, \cite{Epp1992}]
A {\em series-parallel digraph} is a system $G=(V,E,s,t)$ where $(V,E)$ is a directed graph and $s,t\in V$ are two distinct vertices. Vertex $s$ is called the {\em source vertex} and vertex $t$ is called the {\em target vertex} of $G$. For an edge set $E \subseteq V \times V$ and two vertices $u', u \in V$, let
$$\text{merge}_{u'\to u}(E) = 
\begin{array}{ll}
     & \{ (v,w) ~|~ v \not= u' \wedge w \not=u' \wedge (v,w) \in E \} \\
\cup & \{ (v,u) ~|~ v \not= u' \wedge (v,u') \in E \} \\
\cup & \{ (u,w) ~|~ w \not= u' \wedge (u',w) \in E \}.
\end{array}
$$
We say vertex $u'$ and vertex $u$ are merged to vertex $u$. \\
The following recursive definition generates the class of series-parallel digraphs.

\begin{enumerate}
    \item $I_{(s,t)}=(\{s,t\},\{(s,t)\},s,t)$ is a series-parallel digraph with source vertex $s$ and target vertex $t$. It is called the {\em initial series-parallel digraph}.

    \item If $G=(V,E,s,t)$ and $G'=(V',E',s',t')$ are two series-parallel digraphs with source vertices $s,s'$ and target vertices $t,t'$, respectively, then
    \begin{enumerate}
        \item $G \scomp G'$ with vertex set $V \cup V' \setminus \{s'\}$ and edge set $$\text{merge}_{s'\to t}(E \cup E')$$ is a series-parallel digraph with source vertex $s$ and target vertex $t'$ and

        \item $G \pcomp G'$ with vertex set $V \cup V' \setminus \{s',t'\}$ and edge set $$\text{merge}_{s'\to s}(\text{merge}_{t'\to t}(E \cup E'))$$ is a series-parallel digraph with source vertex $s$ and target vertex $t$.

    \end{enumerate}
\end{enumerate}

Operation $\scomp$ is called a {\em series composition} and operation $\pcomp$ is called a {\em parallel composition} of two series parallel graphs.

\end{definition}

Series-parallel digraphs $G=(V,E,s,t)$ are planar and cycle-free. For every vertex $u \in V$, there is a path from $s$ to $u$ and a path from $u$ to $t$. If the directed edges are considered undirected, then series-parallel digraphs have tree-width of at most 2.
Readers interested in a deeper understanding of this graph class are referred to \cite{Aye1978} and \cite{VTL1982}. An example is shown in Figure \ref{Figure12}.

A {\em subdivision} of a directed graph $G$ is a graph obtained by replacing directed edges $(u,v)$ in $G$ by a new vertex $w$ with two new directed edges $(u,w)$ and $(w,v)$.
Series-parallel digraphs do not contain the {\em diamond digraph} $\vec{D}$ from figure \ref{Figure10} as a subgraph. They also do not contain a subdivision of $\vec{D}$ as a subgraph, because these graphs cannot be constructed by either a series or a parallel composition of two series-parallel digraphs.

\begin{figure}[hbt]
\centerline{\includegraphics[width=250pt]{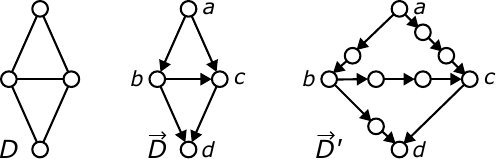}}
\caption{The {\em diamond graph} $D$, a {\em diamond digraph} $\vec{D}$ and a subdivision $\vec{D}'$ of $\vec{D}$.
The graphs $\vec{D}$ and $\vec{D}'$ are not subgraphs of series-parallel digraphs.}
\label{Figure10}
\end{figure}
\begin{theorem}
If a vertex set $S$ is a geodetic set (edge geodetic set) of a series-parallel digraph $G=(V,E,s,t)$ then every vertex $w \in V \setminus S$ (edge $e \in E$) lies on a shortest path from source vertex $s$ to a vertex of $S$ or on a shortest path from a vertex of $S$ to target vertex $t$.
\label{Theorem04}
\end{theorem}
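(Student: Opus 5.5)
The plan is to argue by contradiction and use the fact stated just before the theorem: a series-parallel digraph contains no subdivision of the diamond digraph $\vec{D}$ as a subgraph. I read $\vec{D}$ as the Wheatstone-bridge orientation: a source, two middle vertices joined by a directed edge, and a sink.

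First observe that $s$ is the only vertex without incoming edges and $t$ the only vertex without outgoing edges, so $s,t\in R(G)\subseteq S$. Let $w\in V\setminus S$. Since $S$ is geodetic, there are $a,b\in S$ and a shortest path $p$ from $a$ to $b$ through $w$. Suppose $w$ is neither on a shortest path from $s$ to a vertex of $S$ nor on one from a vertex of $S$ to $t$. In particular
$$d(s,w)+d(w,b)>d(s,b)\quad\text{and}\quad d(a,w)+d(w,t)>d(a,t).$$
Hence every shortest path $q_1$ from $s$ to $b$ and every shortest path $q_2$ from $a$ to $t$ avoids $w$. Both paths exist, because every vertex is reachable from $s$ and reaches $t$. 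We also have $a\neq w\neq b$, and $a\neq s$, since otherwise $p$ itself would be a path of the excluded kind.

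Next I would extract a subdivided $\vec{D}$ from $p\cup q_1\cup q_2$, with $w$ as an interior vertex of the bridge. Write $p=p[a,w]\,p[w,b]$. The path $q_1$ reaches $b$ without visiting $w$, so it enters the suffix $p(w,b]$ somewhere. Let $z$ be the first vertex of $q_1$ on $p(w,b]$. Symmetrically, $q_2$ leaves $a$ and must leave the prefix $p[a,w)$ for good at some point. Let $y$ be the last vertex of $q_2$ on $p[a,w)$.

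Then $p[y,z]$ passes through $w$, $q_2$ leaves $y$ and continues to $t$ off $p[y,z]$, and $q_1$ arrives at $z$ from $s$ off $p(w,z]$. To close the diamond I would take a path $r$ from $s$ to $y$ and use the target $t$ together with a path from $z$ to $t$. The goal is the four branch vertices $s$ (or the first common vertex of $r$ and $q_1$), $y$, $z$, $t$ (or the first common vertex of $q_2$ and the $z$--$t$ path), with bridge $p[y,z]$. Here the two routes $q_1$ and $q_2$ supply the ``crossing'' edges $s\leadsto z$ and $y\leadsto t$. Acyclicity guarantees that these paths cannot run backwards along $p$.

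The main obstacle is ensuring that the five branch paths are internally vertex-disjoint. The paths $q_1$ and $q_2$ may intersect each other, and they may intersect $p$ in several places. I would first try an exchange argument using shortest-path lengths. Suppose $q_1$ and $q_2$ share a vertex $c$. Then $p[a,w]$ can be compared with $q_2[a,c]$, and $p[w,b]$ with $q_1[c,b]$. Writing $d(s,b)\le d(s,c)+d(c,b)$ and its analogue, and using that $p$ is shortest, I would try to derive either $d(s,w)+d(w,b)=d(s,b)$ or $d(a,w)+d(w,t)=d(a,t)$, contradicting the assumption. Similarly, I would reroute $q_1,q_2$ along subpaths of $p$ so that each meets $p$ in a single contiguous segment, keeping them shortest. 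After these normalisations the union is a subdivision of $\vec{D}$, contradicting the exclusion of $\vec{D}$.

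The edge version is handled in the same way. One replaces $w$ by an edge $e=(w_1,w_2)$ on $p$ and uses $y$ before $w_1$ and $z$ after $w_2$, so that $e$ lies on the bridge.
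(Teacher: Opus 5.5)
\textbf{Gap: the case where $q_1$ and $q_2$ meet.} Your setup matches the paper's: the shortest path $p$ from $a$ to $b$ through $w$, a shortest $s$--$b$ path $q_1$, a shortest $a$--$t$ path $q_2$, and the branch vertices $y$ (last vertex of $q_2$ on $p[a,w)$) and $z$ (first vertex of $q_1$ on $p(w,b]$). When the crossing segments of $q_1$ and $q_2$ are vertex-disjoint, your diamond with bridge $p[y,z]$ is exactly the paper's case c).

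The gap is the other case. You plan to exclude a common vertex $c$ of $q_1$ and $q_2$ by a distance exchange, and this cannot work. Your hypotheses together with $c$ only yield $d(s,w)+d(w,t)>d(s,c)+d(c,t)$, which is perfectly consistent. Take the acyclic digraph with edges $(s,a),(a,w),(w,b),(b,t),(s,c),(a,c),(c,b),(c,t)$. Here $p=a,w,b$ is shortest, both of your strict inequalities hold, and the unique shortest paths $q_1=s,c,b$ and $q_2=a,c,t$ meet in $c$. Moreover, there is no subdivided $\vec{D}$ whose bridge contains $w$. Such a bridge would have to start at $a$, and then the side path from $s$ avoiding $a$ and the side path from $a$ avoiding $w$ are both forced through $c$. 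So no exchange or rerouting produces the diamond you aim for. This graph is excluded by a \emph{different} subdivided $\vec{D}$, with branch vertices $s,a,c,b$ and bridge $(a,c)$.

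\textbf{The fix: the paper's case split.} Fix a path $r_0$ from $s$ to $a$ and a path $r_1$ from $b$ to $t$. Let $s'$ be the last vertex of $q_1$ on $r_0$ and $t'$ the first vertex of $q_2$ on $r_1$.

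If $q_1[s',z]$ and $q_2[y,t']$ are disjoint, you get your diamond on $s',y,z,t'$.

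Otherwise, let $c$ be their first common vertex along $q_1$. Then $s',y,c,z$ are the branch vertices of a subdivided $\vec{D}$ with these paths: $s'$ to $y$ along $r_0$ and $p$; $s'$ to $c$ and $c$ to $z$ along $q_1$; the bridge from $y$ to $c$ along $q_2$; and $y$ to $z$ along $p$ through $w$. Internal disjointness follows from the choice of $c$ and acyclicity.

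\textbf{Smaller points.} Both cases need that $q_1$ avoids all of $p[a,w]$ and $q_2$ avoids all of $p[w,b]$, not merely $w$. This is immediate: a common vertex $v$ of $q_1$ and $p[a,w]$ gives $d(s,b)=d(s,v)+d(v,w)+d(w,b)$, i.e.\ a shortest $s$--$b$ path through $w$. So your ``rerouting'' step yields a contradiction rather than a normalisation. Also, the branch vertex on the $s$-side is the last, not the first, common vertex of $r$ and $q_1$.
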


\begin{figure}
\centerline{\includegraphics[width=370pt]{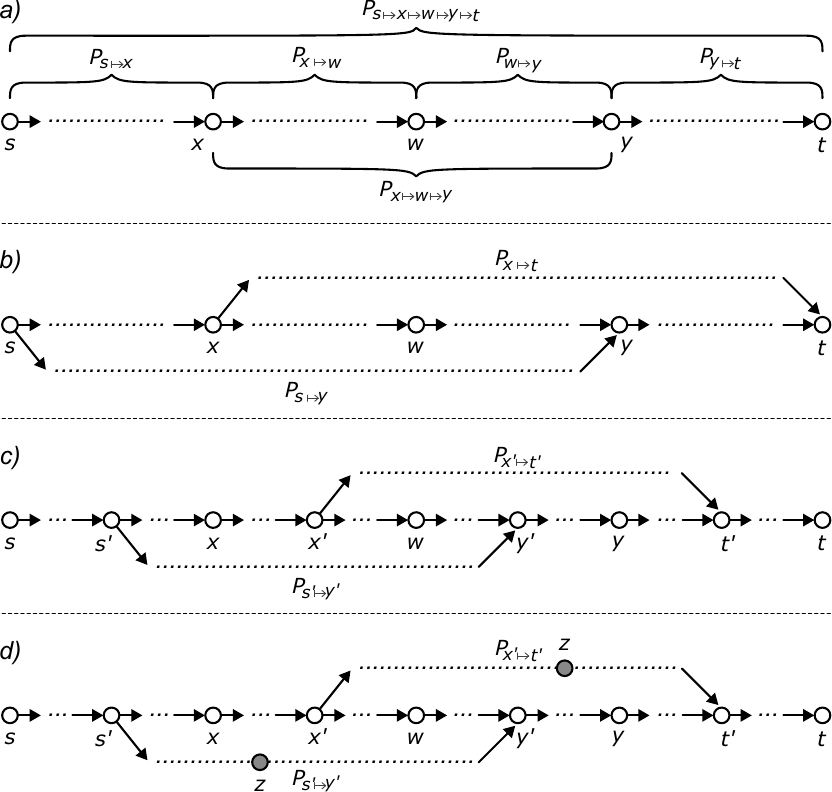}}
\caption{The graphs defined by the paths $P_{s\mapsto x}$, $P_{x\mapsto w}$, $P_{w\mapsto y}$, $P_{y\mapsto t}$, $P_{s\mapsto y}$, $P_{x\mapsto t}$, $P_{s'\mapsto y'}$ and $P_{x'\mapsto t'}$ as used in the proof of theorem \ref{Theorem04}.}
\label{Figure11}
\end{figure}

\begin{proof}
We first consider the case for geodetic sets. After that we consider the case for edge geodetic sets.

We prove the theorem by contradiction. For two vertices $x,y$ contained in a geodetic set, any third vertex $w$ is never exclusively covered by the shortest path between $x$ and $y$. If a vertex $w \in V \setminus S$ lies not on a shortest path from vertex $s$ to a vertex $y \in S$ and not on a shortest path from a vertex $x \in S$ to $t$, then a subdivision of the diamond digraph $\vec{D}$ from figure \ref{Figure10} is a subgraph of $G$, which contradicts the property that series-parallel graphs do not contain a subdivision of $\vec{D}$ as a subgraph.

Let $x\not=s$, $y\not=t$ and $P_{s \mapsto x \mapsto w \mapsto y \mapsto t}$ be a path in $G$ from vertex $s$ via vertex $x$ via vertex $w$ via vertex $y$ to vertex $t$, where the subpath $P_{x \mapsto w \mapsto y}$ of $P_{s \mapsto x \mapsto w \mapsto y \mapsto t}$ from $x$ via $w$ to $y$ is a shortest path. See Graph a) in figure \ref{Figure11}. Every path in $G$ is simple. A path cannot contain a vertex more than once, since $G$ is cycle-free.

Next let $P_{s \mapsto y}$ be a shortest path from $s$ to $y$ and $P_{x \mapsto t}$ be a shortest paths from $x$ to $t$. See Graph b) in figure \ref{Figure11}.
Path $P_{s \mapsto y}$ does not contain any vertex of path $P_{x \mapsto w}$, since otherwise there exists a shortest path from $s$ to $y$ containing vertex $w$, which contradicts our assumptions. For the same reason, path $P_{x \mapsto t}$ does not contain any vertex of path $P_{w \mapsto y}$.

Therefore,
let $s'$ be the last vertex of $P_{s \mapsto y}$ that is also in $P_{s \mapsto x}$ and $y'$ be the first vertex of $P_{s \mapsto y}$ that is also in $P_{w \mapsto y}$. 
Let $x'$ be the last vertex of $P_{x \mapsto t}$ that is also in $P_{x \mapsto w}$ and $y'$ be the first vertex of $P_{x \mapsto t}$ that is also in $P_{y \mapsto t}$. See Graph c) in figure \ref{Figure11}.

If the two subpaths $P_{s' \mapsto y'}$ and $P_{x' \mapsto t'}$ of $P_{s \mapsto y}$ and $P_{x \mapsto t}$, respectively, are vertex disjoint, then $G$ has a subdivision of diamond digraph $\vec{D}$ of figure \ref{Figure10}, where $a=s'$, $b=x'$, $c=y'$ and $d=t'$. See Graph c) in figure \ref{Figure11}. Suppose $P_{s' \mapsto y'}$ and $P_{x' \mapsto t'}$ have a common vertex. Then let $z$ be the first common vertex of these two path. In this case, $G$ also contains a subdivision of the diamond digraph $\vec{D}$, where $a=s'$, $b=x'$, $c=z$, and $d=y'$. See Graph d) in figure \ref{Figure11}.

The proof for edge geodetic sets is analogous. Let $(w,w')$ be an edge on a shortest path from $x$ to $y$. Then we consider the path $P_{s \mapsto x \mapsto w, w' \mapsto y \mapsto t}$ with the subpaths $P_{x \mapsto w} $ and $P_{w' \mapsto y}$. Path $P_{s \mapsto y}$ does not contain any vertex of path $P_{x \mapsto w}$ and path $P_{x \mapsto t}$ does not contain any vertex of path $P_{w' \mapsto y}$.
Therefore, let $s'$ be the last vertex of $P_{s \mapsto y}$ that is also in $P_{s \mapsto x}$ and $y'$ be the first vertex of $P_{s \mapsto y}$ that is also in $P_{w' \mapsto y}$. Let $x'$ be the last vertex of $P_{x \mapsto t}$ that is also in $P_{x \mapsto w}$ and $y'$ be the first vertex of $P_{x \mapsto t}$ that is also in $P_{y \mapsto t}$. The further argumentation is the same as for geodetic sets.
\end{proof}

\begin{definition}[closed geodetic sets]
A set of vertices $S$ of a series-parallel digraph $G=(V,E,s,t)$ is a {\em closed geodetic set} ({\em closed edge geodetic set}) of $G$ if every vertex $w \in V \setminus S$ (edge $e \in E$) lies on a shortest path from a vertex $u \in S$ to a vertex $v \in S$ such that $u \not= s$ or $v \not= t$.
\end{definition}

Every closed geodetic set (closed edge geodetic set) $S$ of a series-parallel digraph $G=(V,E,s,t)$ is a geodetic set (edge geodetic set, respectively) of $G$. If $(s,t)$ is an edge of $G$ then $G$ has no closed edge geodetic set. This means that every closed edge geodetic set has at least three vertices. Closed geodetic sets (closed edge geodetic set) are introduced solely for the computation of the minimum geodetic sets (minimum edge geodetic sets) in parallel composition, as stated in theorem \ref{Theorem05} 3.(b) and \ref{Theorem06} 3.(b). For this purpose, they are computed at each step.

Let $\text{mgs}(G)$ be the number of vertices of a minimum geodetic set of $G$, $\text{cmgs}(G)$ be the number of vertices of a minimum closed geodetic set of $G$ and $\text{sp}(G)$ be the length of a shortest path from source vertex $s$ to target vertex $t$ in $G$.

\begin{theorem}
\label{Theorem05}
The number of vertices in a minimum geodetic set of a series-parallel digraph can be computed as follows.

\begin{enumerate}
    \item (for the initial series-parallel digraph $I_{(s,t)}$)
        \begin{enumerate}
            \item $\text{sp}(I_{(s,t)}) = 1$,
            \item $\text{mgs}(I_{(s,t)}) = 2$ and
            \item $\text{cmgs}(I_{(s,t)}) = 2$.
        \end{enumerate}

    \item (for a series composition of two series-parallel digraph $G$ and $G'$)
    \begin{enumerate}
        \item $\text{sp}(G \scomp G') = \text{sp}(G) + \text{sp}(G')$,
        \item $\text{mgs}(G \scomp G') = \text{mgs}(G) + \text{mgs}(G') -2$ and
        \item $\text{cmgs}(G \scomp G') = \left\{
        \begin{array}{ll}
            \text{mgs}(G)+\text{mgs}(G')-2 & \text{if }
            \begin{array}[t]{ll}
            & ((\text{mgs}(G) > 2) \wedge (\text{mgs}(G') > 2)) \\
             \vee & (\text{mgs}(G) = \text{cmgs}(G) > 2) \\
             \vee & (\text{mgs}(G') = \text{cmgs}(G') > 2) \\
            \end{array} \\
            \text{mgs}(G)+\text{mgs}(G')-1 & \text{otherwise} \\
        \end{array}\right.$.
    \end{enumerate}
    
    \item (for a parallel composition of two series-parallel digraph $G$ and $G'$)
    \begin{enumerate}
        \item $\text{sp}(G \pcomp G') = \min\left\{\text{sp}(G),\text{sp}(G')\right\}$,
        \item $\text{mgs}(G \pcomp G') = \left\{\begin{array}{ll}
            \text{mgs}(G) + \text{mgs}(G') -2 & \text{if } \text{sp}(G) = \text{sp}(G') \\
            \text{mgs}(G) + \text{cmgs}(G') -2 & \text{if } \text{sp}(G) < \text{sp}(G') \\
            \text{cmgs}(G) + \text{mgs}(G') -2 & \text{if } \text{sp}(G) > \text{sp}(G') \\
        \end{array} \right.$
        \item $\text{cmgs}(G \pcomp G') = \text{cmgs}(G) + \text{cmgs}(G') -2$
    \end{enumerate}
    
\end{enumerate}
\end{theorem}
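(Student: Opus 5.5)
We argue by structural induction along the series-parallel decomposition and verify each of the nine formulas separately. In every case we read the two components $G=(V,E,s,t)$ and $G'=(V',E',s',t')$ as subgraphs of the composed graph $H$. The key tool is Theorem \ref{Theorem04}: a geodetic set only needs to cover a vertex $w$ by a shortest path from $s$ to a vertex of the set, or from a vertex of the set to $t$. We also use that $s$ and $t$ belong to every geodetic set of a series-parallel digraph, since they are the unique terminal start and end vertices. Thus every geodetic set contains $\{s,t\}$, and the question is only which interior vertices are needed.

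\textbf{Initial graph and series composition.} The statements for $I_{(s,t)}$ are immediate. For $\text{sp}$ in series composition, every $s$--$t'$ path passes through the merged cut vertex $t=s'$. For $\text{mgs}$ we show two inclusions.
\begin{itemize}
\item If $S,S'$ are minimum geodetic sets of $G,G'$, then $S\cup S'$ (with $t$ and $s'$ identified) is geodetic in $H$. Any shortest path inside $G$ or $G'$ remains shortest in $H$ because of the cut vertex. The merged vertex is counted twice, and $s',t$ do not both have to be kept, which gives $\text{mgs}(G)+\text{mgs}(G')-2$.
\item Conversely, from a geodetic set $S_H$ of $H$, take $(S_H\cap V)\cup\{t\}$ and $(S_H\cap V')\cup\{s'\}$. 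These are geodetic in $G$ and $G'$, because a shortest path through the cut vertex splits into shortest pieces ending at $t=s'$.
\end{itemize}
The subtle point is that the cut vertex need not be in $S_H$. The counting $-2$ relies on the fact that the cut vertex is always added in the restriction, while in the union we may drop it whenever it is covered by an $s\to t'$ path. It is indeed covered, because $s$ and $t'$ lie in the union.

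For $\text{cmgs}$ of a series composition we must cover every vertex without using the pair $(s,t')$ in $H$. We proceed in four steps.
\begin{enumerate}
\item A pair $(u,v)$ with $u\neq s$ or $v\neq t'$ suffices as soon as each component contains an interior set vertex that is closed-covering on its side. This explains the condition $\text{mgs}>2$ on both sides, or a component whose minimum set is already closed.
\item If $\text{mgs}(G)>2$ and $\text{mgs}(G')>2$, the set $S\cup S'$ uses interior vertices $x\in G$ and $y\in G'$. We show that paths $s\to y$ and $x\to t'$ cover everything that the $s\to t'$ paths covered. These paths pass through the cut vertex, so each of them restricts to an $s\to t$ shortest path in $G$ and an $s'\to t'$ shortest path in $G'$.
\item If only one side, say $G$, has $\text{mgs}=\text{cmgs}>2$, then $G$ is handled without $(s,t)$. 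The $G'$ side is covered from the interior vertex of $S$ to $t'$, which passes through the cut vertex.
\item Otherwise one extra vertex is necessary and sufficient. Adding the cut vertex gives $+1$. For the lower bound, we show that when $\text{mgs}(G)=2$, or $G$ has no closed minimum set, any closed set must pay an extra vertex.
\end{enumerate}
I expect step 4, the exact lower bound in the \emph{otherwise} case, to be the main obstacle. It requires a careful case analysis of which pairs can cover the vertices of an $s$--$t$ shortest path in $G$ when no interior set vertex exists there.

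\textbf{Parallel composition.} The shortest $s$--$t$ distance is clearly the minimum of the two component distances. Every shortest path in $H$ between two vertices of the same component stays in that component. A path leaving the component must go through $s$ or $t$, and since $H$ is acyclic it cannot re-enter. Hence geodetic sets of $H$ are unions $S\cup S'$ sharing $s,t$, and the $-2$ accounts for the shared vertices. The only pair that changes status is $(s,t)$: in the component with strictly larger $\text{sp}$, the $s\to t$ paths are no longer shortest in $H$. That component must therefore be covered by a closed geodetic set, which gives the $\text{cmgs}$ term. When $\text{sp}(G)=\text{sp}(G')$, both keep their $s\to t$ shortest paths, which gives the $\text{mgs}+\text{mgs}$ formula. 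For $\text{cmgs}(H)$ the pair $(s,t)$ is forbidden on both sides, which gives $\text{cmgs}(G)+\text{cmgs}(G')-2$.

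The minimality arguments in all cases restrict an optimal set of $H$ to the components and add $s,t$ or the cut vertex. Theorem \ref{Theorem04} ensures that each restriction remains geodetic, respectively closed geodetic.
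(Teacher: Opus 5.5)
Your treatment of the initial graph, of 2(a) and 2(b), and of the whole parallel composition is essentially the paper's argument. You restrict an optimal set to the components, re-add $s,t$ or the cut vertex, and observe that in $G\pcomp G'$ only the pair $(s,t)$ changes status.

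\textbf{The gap: step 4 of 2(c).} The lower bound $\text{cmgs}(G\scomp G')\ge\text{mgs}(G)+\text{mgs}(G')-1$ in the \emph{otherwise} case is announced but not proved. Moreover, the target you state for it (``when $\text{mgs}(G)=2$, or $G$ has no closed minimum set'') is false as written. If $\text{mgs}(G)=2$ but $\text{mgs}(G')=\text{cmgs}(G')>2$, no extra vertex is needed. The same holds if both $\text{mgs}$ values exceed $2$ while $G$ has no closed minimum set.

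The missing idea is a counting argument, not a case analysis of $s$--$t$ paths in $G$. Let $c=t=s'$ be the cut vertex and $T$ a closed geodetic set of $H=G\scomp G'$. Put $S=(T\cap V(G))\cup\{c\}$ and $S'=(T\cap V(G'))\cup\{c\}$; these are geodetic in $G$ and $G'$. Then $|S|+|S'|=|T|+1$ if $c\in T$, and $|S|+|S'|=|T|+2$ if $c\notin T$. So $|T|=\text{mgs}(G)+\text{mgs}(G')-2$ forces $c\notin T$ and forces $S,S'$ to be \emph{minimum} geodetic sets.

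Now suppose $\text{mgs}(G)=2$, so $T\cap V(G)=\{s\}$. A vertex of $G'$ outside $T\cup\{c\}$ is covered in one of two ways. Either it lies on a path for a pair $(s,v)$ with $v\neq t'$, whose part after $c$ is a shortest $s'$--$v$ path in $G'$. Or it lies on a path for a pair inside $V(G')\setminus\{c\}$. In both cases the covering pair in $S'$ differs from $(s',t')$. Hence $S'$ is a closed minimum geodetic set, so $\text{cmgs}(G')=\text{mgs}(G')$. If in addition $\text{mgs}(G')=2$, then $T=\{s,t'\}$, and $c$ cannot be covered by an admissible pair.

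The symmetric argument handles $\text{mgs}(G')=2$. Together these show that a closed set of size $\text{mgs}(G)+\text{mgs}(G')-2$ exists only if one of the three disjuncts holds. The paper's proof does not contain this step either: it only exhibits the two candidate sets and says when the first is closed. So this argument is needed to complete either version.

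\textbf{Two smaller points.}

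In 2(b), dropping $c$ from $S\cup S'$ is not justified merely because $c$ lies on an $s$--$t'$ path. Vertices covered only by pairs $(a,t)$ or $(s',b)$ lose an endpoint. You must reroute them to $(a,t')$ and $(s,b)$, using that every $a$--$t'$ path and every $s$--$b$ path passes through $c$, so shortest pieces concatenate to shortest paths.

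Theorem~\ref{Theorem04} plays no role in this proof, and the paper does not use it. It says nothing about restricting a geodetic set of $H$ to a component, and nothing about closed sets. Your closing sentence attributing the restriction facts to it is therefore wrong. Those facts come from the cut vertex in the series case. In the parallel case they come from $s$ being a source and $t$ a sink. That is also the correct reason only $s$--$t$ paths can pass through the other component; acyclicity alone does not give it, since such a path does re-enter, at $t$.
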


\begin{proof}
The first case is obvious. Since the initial series-parallel digraph $I_{(s,t)}$ is explicitly defined, the values can be easily verified. The calculations of the number of edges on a shortest path from $s$ to $t$ in all three sub-cases 1.(a), 2.(a) and 3.(a) are also obvious. Therefore, we only consider the cases 2.(b), 2.(c), 3.(b) and 3.(c) in more detail.

\begin{enumerate}
\setcounter{enumi}{1}
\item (series composition)

Every geodetic set $T$ of $G \scomp G'$ has at least $\text{mgs}(G) + \text{mgs}(G') -2$ vertices, because $S=(T \cap V(G)) \cup \{t\}$ is a geodetic set of $G$ and $S'=(T \cap V(G')) \cup \{s'\}$ is a geodetic set of $G'$. This implies that $|T| \geq |S|+|S'|-2 \geq \text{mgs}(G) + \text{mgs}(G') -2$.

\begin{enumerate}
    \setcounter{enumii}{1}
    \item  Let $S$ be a minimum geodetic set of $G$, $S'$ be a minimum geodetic set of $G'$ and $T^{\text{\,-} t,\text{\,-} s'} = (S \setminus \{t\}) \cup (S' \setminus \{s'\})$.

    \medskip
    Vertex set $T^{\text{\,-} t,\text{\,-} s'}$ is a geodetic set of $G \scomp G'$ with $\text{mgs}(G)+\text{mgs}(G')-2$ vertices because in $G \scomp G'$
    \begin{enumerate}
    \item every vertex $u \in V(G) \setminus T^{\text{\,-} t,\text{\,-} s'}$, $u \not = t$, is on a shortest path from a vertex of $S \setminus \{t\}$ to a vertex of $(S \setminus \{t\}) \cup \{t'\}$,
    \item every vertex $u \in V(G') \setminus T^{\text{\,-} t,\text{\,-} s'}$, $u \not = s'$, is on a shortest path from a vertex of $(S' \setminus \{s'\}) \cup \{s\}$ to a vertex of $S' \setminus \{s'\}$ and
    \item vertex $t$ is on a shortest path from vertex $s$ to vertex $t'$.
    \end{enumerate}
    
    \item  Let $S$ be a minimum geodetic set of $G$, $S'$ be a minimum geodetic set of $G'$ and $T^{\text{\,-} s'} = S \cup (S' \setminus \{s'\})$.
    
    \medskip
    Vertex set $T^{\text{\,-} s'}$ is a closed geodetic set of $G \scomp G'$ with $\text{mgs}(G)+\text{mgs}(G')-1$ because every vertex of $V(G \scomp G') \setminus T^{\text{\,-} s'}$ is on a shortest path in $G \scomp G'$ from a vertex of $S$ to a vertex of $S$ or from a vertex of $(S' \setminus \{s'\}) \cup \{t\}$ to a vertex of $(S' \setminus \{s'\}) \cup \{t\}$.

    \medskip
    Sometimes, however, vertex set $T^{\text{\,-} t,\text{\,-} s'}$ as defined in case (b) is already a closed geodetic set of $G \scomp G'$ with $\text{mgs}(G)+\text{mgs}(G')-2$ vertices.
    This is the case if $S$ and $S'$ contain more vertices than one source vertex and one target vertex, i.e., $|S| > 2$ and $|S'| > 2$.
    It is also the case if $S$ or $S'$ contain more vertices than one source vertex and one target vertex and is itself a closed geodetic set. This is the case if and only if $\text{mgs}(G) = \text{cmgs}(G) > 2$ or $\text{mgs}(G') = \text{cmgs}(G') > 2$, respectively.

\end{enumerate}
\item (parallel composition)

Every geodetic set $T$ of $G \pcomp G'$ has at least $\text{mgs}(G) + \text{mgs}(G') -2$ vertices because $S=T \cap V(G)$ is a geodetic set of $G$, $S'=(T  \cap V(G')) \cup \{s',t'\}$ is a geodetic set of $G'$ and $|T| = |S|+|S'|-2 \geq \text{mgs}(G) + \text{mgs}(G') -2$. Here $|T| = |S|+|S'|-2$ because $s'$ and $t'$ are not vertices of $G \pcomp G'$ and thus not vertices of $T$.

\begin{enumerate}
    \setcounter{enumii}{1}
    
    \item
    \begin{enumerate}
        \item
        Let $\text{sp}(G) = \text{sp}(G')$, $S$ be a minimum geodetic set of $G$, $S'$ be a minimum geodetic set of $G'$ and $T^{\text{\,-} s',\text{\,-} t'} = S \cup (S' \setminus \{t',s'\})$.

        \medskip
        Vertex set $T^{\text{\,-} s',\text{\,-} t'}$ is a geodetic set of $G \pcomp G'$ with $\text{mgs}(G)+\text{mgs}(G')-2$ vertices because every vertex $u \in V(G \pcomp G') \setminus T^{\text{\,-} s',\text{\,-} t'}$ is on a shortest path from a vertex of $S$ to a vertex of $S$ if $u \in V(G)$, or from a vertex of $(S' \setminus \{s',t'\}) \cup \{s,t\}$ to a vertex of $(S' \setminus \{s',t'\}) \cup \{s,t\}$ if $u \in V(G')$.
    
        \item Let $\text{sp}(G) < \text{sp}(G')$, $S$ be a minimum geodetic set of $G$, $S'_c$ be a minimum closed geodetic set of $G'$ and $T_c^{\text{\,-} s',\text{\,-}t'} = S \cup (S'_c \setminus \{s',t'\})$.
        Every minimum geodetic set for $G \pcomp G'$ has at least $|S|+|S'|-2 = \text{mgs}(G) + \text{cmgs}(G') -2$ vertices, since the pair $s,t$ cannot cover any vertices of $G'$.

        \medskip
        Vertex set $T_c^{\text{\,-} s',\text{\,-}t'}$ is a geodetic set of $G \pcomp G'$ with $\text{mgs}(G)+\text{cmgs}(G')-2$ vertices because every vertex $u \in V(G \pcomp G') \setminus T_c^{\text{\,-} s',\text{\,-}t'}$ is on a shortest path
        \begin{enumerate}
            \item from a vertex of $S$ to a vertex of $S$, if $u \in V(G)$ or
            \item from a vertex of $(S'_c \setminus \{s',t'\}) \cup \{s\}$ to a vertex of $S'_c \setminus \{s',t'\}$ or from a vertex of $S'_c \setminus \{s',t'\}$ to a vertex of $(S'_c \setminus \{s',t'\}) \cup \{t\}$, if $u \in V(G')$.
        \end{enumerate}
        \medskip
        Vertex set $T_c^{\text{\,-} s',\text{\,-}t'}$ is a minimum geodetic set of $G \pcomp G'$ because $R = T_c^{\text{\,-} s',\text{\,-}t'} \cap V(G)$ is a geodetic set of $G$, $R'_c = (T_c^{\text{\,-} s',\text{\,-}t'} \cap V(G')) \cup \{s',t'\}$ is a closed geodetic set of $G'$ and $|T_c^{\text{\,-} s',\text{\,-}t'}| = |R| + |R'_c| - 2$.
        \item If $\text{sp}(G) > \text{sp}(G')$ then exchange $G$ and $G'$ and consider the case where $\text{sp}(G) < \text{sp}(G')$.
         
    \end{enumerate}
    \item Let $S_c$ be a minimum closed geodetic set of $G$, $S'_c$ be a minimum closed geodetic set of $G'$ and $T_{cc}^{\text{\,-} s',t'} = S_c \cup (S'_c \setminus \{s',t'\})$.

    \medskip
    
    Vertex set $T_{cc}^{\text{\,-} s',t'}$ is a  closed geodetic set of $G \pcomp G'$ because every vertex $u \in V(G \pcomp G') \setminus T_{cc}^{\text{\,-} s',t'}$ is on a shortest path
    \begin{enumerate}
        \item from a vertex of $S_c$ to a vertex of $S_c \setminus \{t\}$ or from a vertex of $S_c \setminus \{s\}$ to a vertex of $S_c$ , if $u \in V(G)$, or
        \item from a vertex of $(S'_c \setminus \{s',t'\}) \cup \{s\}$ to a vertex of $S'_c \setminus \{s',t'\}$ or from a vertex of $S'_c \setminus \{s',t'\}$ to a vertex of $(S' \setminus \{s',t'\}) \cup \{t\}$, if $u \in V(G')$.
    \end{enumerate}
    
    Vertex set $T_{cc}^{\text{\,-} s',t'}$ is a minimum closed geodetic set of $G \pcomp G'$ because $R_c = T_{cc}^{\text{\,-} s',t'} \cap V(G)$ is a minimum closed geodetic set of $G$, $R'_c = (T_{cc}^{\text{\,-} s',t'} \cap V(G')) \cup \{s',t'\}$ is a minimum closed geodetic set of $G'$ and $|T_{cc}^{\text{\,-} s',t'}| = |R_c| + |R'_c| - 2$.

\end{enumerate}

\end{enumerate}
    
\end{proof}

\begin{figure}[hbt]
\centerline{\includegraphics[width=200pt]{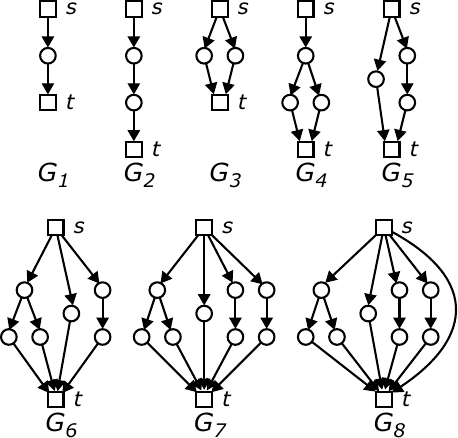}}
\caption{Eight series-parallel digraphs $G_1,\ldots,G_8$ defined as follows. \newline
$\begin{array}{llll}
G_1 = I_{(s,t)} \scomp I_{(s,t)}, & G_2 = G_1 \scomp I_{(s,t)}, &
G_3 = G_1 \pcomp G_1, & G_4 = I_{(s,t)} \scomp G_3, \\
G_5 = G_1 \pcomp G_2, & G_6 = G_4 \pcomp G_5, &  G_7 = G_6 \pcomp G_2, & G_8 = G_7 \pcomp I_{(s,t)}.
\end{array}$
}
\label{Figure12}
\end{figure}

We illustrate the calculation rules from theorem \ref{Theorem05} using the example graphs of figure \ref{Figure12}.
$$
\begin{array}{l|l|l|l}
G & \text{sp}(G) & \text{mgs}(G) &  \text{cmgs}(G) \\
\hline
I_{(s,t)}                        & 1.(a)           & 1.(b)     & 1.(c) \\
                                 & 1               & 2         & 2 \\
\hline
G_1 = I_{(s,t)} \scomp I_{(s,t)} & 2.(a)           & 2.(b)     & 2.(c), \text{ otherwise} \\
                                 & 1+1 = 2         & 2+2-2 = 2 & 2+2-1 = 3 \\
\hline
G_2 = G_1 \scomp I_{(s,t)}       & 2.(a)           & 2.(b)     & 2.(c), \text{ otherwise} \\
                                 & 2+1 = 3         & 2+2-2 = 2 & 2+2-1 = 3 \\
\hline
G_3 = G_1 \pcomp G_1             & 3.(a)           & 3.(b), \text{sp}(G_1) = \text{sp}(G_1) & 3.(c) \\
                                 & \min\{2,2\} = 2 & 2+2-2 = 2 & 3+3-2 = 4 \\
\hline
G_4 = I_{(s,t)} \scomp G_3       & 2.(a)           & 2.(b)     & 2.(c), \text{ otherwise} \\
                                 & 1+2 = 3         & 2+2-2 = 2 & 2+2-1 = 3 \\

\hline
G_5 = G_1 \pcomp G_2             & 3.(a)           & 3.(b), \text{sp}(G_1) < \text{sp}(G_2)     & 3.(c) \\
                                 & \min\{2,3\} = 2 & 2+3-2 = 3 & 3+3-2 = 4 \\
\hline
G_6 = G_4 \pcomp G_5             & 3.(a)           & 3.(b), \text{sp}(G_4) > \text{sp}(G_5)     & 3.(c) \\
                                 & \min\{3,2\} = 2 & 3+3-2 = 4 & 3+4-2 = 5 \\
\hline
G_7 = G_6 \pcomp G_2             & 3.(a)           & 3.(b), \text{sp}(G_6) > \text{sp}(G_2)     & 3.(c) \\
                                 & \min\{2,3\} = 2 & 5+2-2 = 5 & 5+3-2 = 6 \\
\hline
G_8 = G_7 \pcomp I_{(s,t)}       & 3.(a)           & 3.(b), \text{sp}(G_7) > \text{sp}(I_{(s,t)})     & 3.(c) \\
                                 & \min\{2,1\} = 1 & 6+2-2 = 6 & 6+2-2 = 6 \\
\end{array}
$$

The same methodology can be used, with minor adjustments, to compute minimum edge geodetic sets. However, special attention is required when dealing with series compositions with graphs where the shortest path has length one. 

Let $\text{megs}(G)$ be the number of vertices of an minimum edge geodetic set of $G$, $\text{cmegs}(G)$ be the number of vertices of a minimum closed edge geodetic set of $G$. If $G$ has no closed edge geodetic set then $\text{cmegs}(G) = \text{undefined}$. Since the closed edge geodetic set is only required for the parallel composition and only for the graph with the longer shortest path, the process can still be formulated, even in cases where the closed edge geodetic set is undefined.

\begin{theorem}
\label{Theorem06}
The number of vertices in a minimum edge geodetic set of a series-parallel digraph can be computed as follows.

\begin{enumerate}
    \item (for the initial series-parallel digraph $I_{(s,t)}$)
        \begin{enumerate}
            \item $\text{sp}(I_{(s,t)}) = 1$,
            \item $\text{megs}(I_{(s,t)}) = 2$ and
            \item $\text{cmegs}(I_{(s,t)}) = \text{\rm undefined}$.
        \end{enumerate}

    \item (for a series composition of two series-parallel digraph $G$ and $G'$)
    \begin{enumerate}
        \item $\text{sp}(G \scomp G') = \text{sp}(G) + \text{sp}(G')$,
        \item $\text{megs}(G \scomp G') = \text{megs}(G) + \text{megs}(G') -2$ and
        \item $\text{cmegs}(G \scomp G') = \left\{
        \begin{array}{ll}
            \text{megs}(G)+\text{megs}(G')-2 & \text{if }
            \begin{array}[t]{ll}
            & ((\text{megs}(G) > 2) \wedge (\text{megs}(G') > 2)) \\
             \vee & (\text{megs}(G) = \text{cmegs}(G)) \\
             \vee & (\text{megs}(G') = \text{cmegs}(G')) \\
            \end{array} \\
            \text{megs}(G)+\text{megs}(G')-1 & \text{otherwise} \\
        \end{array}\right.$.
    \end{enumerate}
    
    \item (for a parallel composition of two series-parallel digraph $G$ and $G'$)
    \begin{enumerate}
        \item $\text{sp}(G \pcomp G') = \min\left\{\text{sp}(G),\text{sp}(G')\right\}$,
        \item $\text{megs}(G \pcomp G') = \left\{\begin{array}{ll}
            \text{megs}(G) + \text{megs}(G') -2 & \text{if } \text{sp}(G) = \text{sp}(G') \\
            \text{megs}(G) + \text{cmegs}(G') -2 & \text{if } \text{sp}(G) < \text{sp}(G') \\
            \text{cmegs}(G) + \text{megs}(G') -2 & \text{if } \text{sp}(G) > \text{sp}(G') \\
        \end{array} \right.$
        \item $\text{cmegs}(G \pcomp G') = \left\{
            \begin{array}{ll}
            \text{\rm undefined} & \text{if }
                \begin{array}[t]{ll}
                    & (\text{cmegs}(G) = \text{\rm undefined}) \\
                    \vee & (\text{cmegs}(G') = \text{\rm undefined}) \\
                \end{array} \\
             \text{cmegs}(G) + \text{cmegs}(G') -2 & \text{otherwise}
            \end{array}\right.$
    \end{enumerate}
    
\end{enumerate}
\end{theorem}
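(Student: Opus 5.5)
The plan is to mirror the proof of theorem \ref{Theorem05} case by case, with Theorem \ref{Theorem04} (edge version) as the structural tool. Theorem \ref{Theorem04} says that in a series-parallel digraph every edge covered by an edge geodetic set $S$ lies on a shortest path from $s$ to a vertex of $S$ or from a vertex of $S$ to $t$. Consequently every edge geodetic set contains $s$ and $t$, since these are terminal vertices. This also lets us restrict attention to paths with one endpoint in $\{s,t\}$ whenever we glue graphs together.

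\textbf{Base case and the $\text{sp}$ rules.} For $I_{(s,t)}$ the set $\{s,t\}$ covers the single edge, so $\text{megs}=2$. The only edge is $(s,t)$, and it can be covered only by the pair $(s,t)$, so no closed edge geodetic set exists and $\text{cmegs}$ is undefined. The rules for $\text{sp}$ are the same as in theorem \ref{Theorem05}.

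\textbf{Series composition, rule 2.(b).} For the lower bound, take an edge geodetic set $T$ of $G\scomp G'$. Then $(T\cap V(G))\cup\{t\}$ is an edge geodetic set of $G$. The reason is that every shortest path of $G\scomp G'$ crossing the cut vertex $t$ restricts, within $G$, to a shortest path ending at $t$, and $t$ is a cut vertex, so shortest paths inside $G$ are exactly shortest paths of the composition. The symmetric statement holds for $G'$, which gives $|T|\ge \text{megs}(G)+\text{megs}(G')-2$. For the upper bound, take minimum sets $S,S'$ and form $(S\setminus\{t\})\cup(S'\setminus\{s'\})$. An edge of $G$ covered by a pair $(a,t)$ is now covered by $(a,t')$, because concatenating with a shortest $t\to t'$ path is still shortest. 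Symmetrically, an edge of $G'$ covered by $(s',b)$ is covered by $(s,b)$. When $S=\{s,t\}$ or $S'=\{s',t'\}$, the pair $(s,t')$ covers the corresponding shortest paths.

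\textbf{Series composition, rule 2.(c).} The set $S\cup(S'\setminus\{s'\})$ is closed, because edges of $G$ covered by $(s,t)$ remain covered by a pair other than $(s,t')$, and similarly in $G'$ via $t$ as the source. This gives the ``$-1$'' bound. The ``$-2$'' cases are handled as in theorem \ref{Theorem05}. If both sets contain an inner vertex, those inner vertices replace $s$ or $t'$ in every use of the pair $(s,t')$. If one side is already closed, its edges never need $(s,t')$, and the other side's edges only use $(s,t)$ or $(s',t')$, which extend to pairs involving an inner vertex or $t$.

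I expect the main obstacle to be the interplay with undefined $\text{cmegs}$ and with $\text{sp}=1$. Note that the condition in 2.(c) no longer requires $>2$. This is because a closed edge geodetic set always has at least three vertices, so $\text{megs}=\text{cmegs}$ already forces the set to contain an inner vertex. I would verify this, and also check that optimality holds: any closed set of size $\text{megs}(G)+\text{megs}(G')-2$ must contain an inner vertex on some side which, together with $t$, closes the cover. The argument is by contradiction, using Theorem \ref{Theorem04} to show that edges covered only via $(s,t')$ force one of the three listed conditions.

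\textbf{Parallel composition, rule 3.(b).} The lower bound works by restricting $T$ to each side and adding $s',t'$. Any shortest $s$-$t$ path of the composition runs inside a single side, and because there are no cross edges, shortest paths between vertices of one side stay inside that side. If $\text{sp}(G)<\text{sp}(G')$, no pair $(s,t)$ path enters $G'$. Hence the restriction to $G'$ must be closed, and this is exactly where an undefined $\text{cmegs}(G')$ can arise. I would argue it cannot happen in that case: $(s',t')$ can be an edge of $G'$ only if $\text{sp}(G')=1$, which contradicts $\text{sp}(G)<\text{sp}(G')$. The $\text{sp}(G)=\text{sp}(G')$ case and rule 3.(c) copy the vertex arguments verbatim. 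For 3.(c), a closed set of the composition restricts to closed sets on both sides. So if either side has none (it contains the edge $(s,t)$), then the composition contains the edge $(s,t)$ and has none either, which gives the ``undefined'' clause.
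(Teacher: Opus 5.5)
Your route matches the paper's: lower bounds come from restricting a set to the two sides (adding $t$, resp.\ $s',t'$), upper bounds come from $(S\setminus\{t\})\cup(S'\setminus\{s'\})$, from $S\cup(S'\setminus\{s'\})$ and from unions of (closed) sets, and $\text{sp}(G)<\text{sp}(G')$ forces $\text{cmegs}(G')$ to be defined.

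\textbf{The gap.} It is the lower bound in the ``otherwise'' branch of 2.(c). There you must rule out a closed edge geodetic set of $G\scomp G'$ with $\text{megs}(G)+\text{megs}(G')-2$ vertices, but you only promise a contradiction ``using Theorem 4''. Your one concrete claim (an inner vertex which, ``together with $t$'', closes the cover) points the wrong way: a closed set of that size cannot contain $t$ at all. Theorem 4 is also not the tool needed here. The paper itself only asserts this ``if and only if'', so you cannot fall back on it.

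\textbf{The missing step.} It is a count at the cut vertex $t$. Let $T$ be closed, and put $A=(T\cap V(G))\cup\{t\}$ and $B=(T\cap V(G'))\cup\{t\}$. Both are edge geodetic sets by your 2.(b) argument. Moreover, $|A|+|B|$ equals $|T|+2$ if $t\notin T$ and $|T|+1$ if $t\in T$. Hence $|T|=\text{megs}(G)+\text{megs}(G')-2$ forces $t\notin T$, $|A|=\text{megs}(G)$ and $|B|=\text{megs}(G')$.

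Now suppose $\text{megs}(G)=2$, so $T\cap V(G)=\{s\}$. A pair of $T$ covering an edge of $G'$ either starts in $V(G')\setminus\{t\}$, or starts at $s$ and passes through $t$. In both cases it yields a pair of $B$ covering that edge, and the forbidden pair $(s,t')$ corresponds exactly to $(t,t')$. So $B$ is closed in $G'$, i.e.\ $\text{megs}(G')=\text{cmegs}(G')$.

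Symmetrically, $\text{megs}(G')=2$ makes $A$ closed in $G$, so $\text{megs}(G)=\text{cmegs}(G)$. If both values exceed $2$, the first condition holds. Hence if all three conditions fail, no closed set of size $\text{megs}(G)+\text{megs}(G')-2$ exists.

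\textbf{A smaller point.} In 3.(b) and 3.(c) you use ``no closed set $\Rightarrow$ $(s,t)\in E$'', but you only state the converse. Add that if $(s,t)\notin E$, then $V$ itself is closed: each edge $(a,b)$ is a shortest $a$--$b$ path, and $(a,b)\neq(s,t)$.
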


\begin{proof}
It is only necessary to examine cases 2.(b), 2.(c), 3.(b) and 3.(c) in more detail. Case 1 is obvious, and cases 2.(a) and 3.(a) correspond to those in theorem \ref{Theorem05}.
\begin{enumerate}
\setcounter{enumi}{1}
\item (series composition)

Every edge geodetic set $T$ of $G \scomp G'$ has at least $\text{megs}(G) + \text{megs}(G') -2$ vertices, because $S=(T \cap V(G)) \cup \{t\}$ is an edge geodetic set of $G$ and $S'=(T \cap V(G')) \cup \{s'\}$ is an edge geodetic set of $G'$. This implies that $|T| \geq |S|+|S'|-2 \geq \text{megs}(G) + \text{megs}(G') -2$.

\begin{enumerate}
    \setcounter{enumii}{1}
    
    \item  Let $S$ be a minimum edge geodetic set of $G$, $S'$ be a minimum edge geodetic set of $G'$ and $T^{\text{\,-} t,\text{\,-} s'} = (S \setminus \{t\}) \cup (S' \setminus \{s'\})$.

    \medskip
    Vertex set $T^{\text{\,-} t,\text{\,-} s'}$ is an edge geodetic set of $G \scomp G'$ with $\text{megs}(G)+\text{megs}(G')-2$ vertices because in $G \scomp G'$
    \begin{enumerate}
    \item every edge $e \in E(G)$ is on a shortest path from a vertex of $S \setminus \{t\}$ to a vertex of $(S \setminus \{t\}) \cup \{t'\}$ and
    \item every edge $e \in E(G')$ is on a shortest path from a vertex of $(S' \setminus \{s'\}) \cup \{s\}$ to a vertex of $S' \setminus \{s'\}$.
    \end{enumerate}
    
    \item  Let $S$ be a minimum edge geodetic set of $G$, $S'$ be a minimum edge geodetic set of $G'$ and $T^{\text{\,-} s'} = S \cup (S' \setminus \{s'\})$.
    
    \medskip
    Vertex set $T^{\text{\,-} s'}$ is a closed edge geodetic set of $G \scomp G'$ with $\text{megs}(G)+\text{megs}(G')-1$ because every edge of $E(G \scomp G') $ is on a shortest path in $G \scomp G'$ from a vertex of $S$ to a vertex of $S$ or from a vertex of $(S' \setminus \{s'\}) \cup \{t\}$ to a vertex of $(S' \setminus \{s'\}) \cup \{t\}$.

    \medskip
    Sometimes, however, vertex set $T^{\text{\,-} t,\text{\,-} s'}$ as defined in case (b) is already a closed edge geodetic set of $G \scomp G'$ with $\text{megs}(G)+\text{megs}(G')-2$ vertices.
    This is the case if $S$ and $S'$ contain more vertices than one source vertex and one target vertex, i.e., $|S| > 2$ and $|S'| > 2$.
    It is also the case if $S$ or $S'$ contain more vertices than one source vertex and one target vertex and is itself a closed edge geodetic set. This is the case if and only if $\text{megs}(G) = \text{cmegs}(G) > 2$ or $\text{megs}(G') = \text{cmegs}(G') > 2$, respectively. Recall that, by definition, graphs with sp(G)= 1 do not have closed edge geodetic sets.

\end{enumerate}

\item (parallel composition)

Every edge geodetic set $T$ of $G \pcomp G'$ has at least $\text{megs}(G) + \text{megs}(G') -2$ vertices because $S=T \cap V(G)$ is an edge geodetic set of $G$, $S'=(T  \cap V(G'))\cup \{s',t'\}$ is an edge geodetic set of $G'$ and $|T|=|S|+|S'|-2 \geq \text{megs}(G) + \text{megs}(G') -2$.

\begin{enumerate}
    \setcounter{enumii}{1}
    
    \item
    \begin{enumerate}
        \item
        Let $\text{sp}(G) = \text{sp}(G')$, $S$ be a minimum edge geodetic set of $G$, $S'$ be a minimum edge geodetic set of $G'$ and $T^{\text{\,-} s',\text{\,-} t'} = S \cup (S' \setminus \{t',s'\})$.

        \medskip
        Vertex set $T^{\text{\,-} s',\text{\,-} t'}$ is an edge geodetic set of $G \pcomp G'$ with $\text{megs}(G)+\text{megs}(G')-2$ vertices because in $G \scomp G'$
        \begin{enumerate}
        \item every edge $e \in E(G)$ is on a shortest path from a vertex of $S$ to a vertex of $S$ and
        \item every edge $e \in E(G')$ is on a shortest Path from a vertex of $(S' \setminus \{s',t'\}) \cup \{s,t\}$ to a vertex of $(S' \setminus \{s',t'\}) \cup \{s,t\}$.
        \end{enumerate}
    
        \item Let $\text{sp}(G) <\text{sp}(G')$, $S$ be a minimum edge geodetic set of $G$, $S'_c$ be a closed minimum edge geodetic set of $G'$ and $T_c^{\text{\,-} s',{\text{\,-} t'}} = S \cup (S'_c \setminus \{t',s'\})$. Now every minimum edge geodetic set for $G \pcomp G'$ has at least $|S|+|S'|-2 = \text{mgs}(G) + \text{cmgs}(G') -2$ vertices, since the pair $s,t$ cannot cover any vertices of $G'$. Since the shortest path of G' is always greater than 1, the $\text{megs}$ is always defined.

        \medskip
        Vertex set $T_c^{\text{\,-} s',{\text{\,-} t'}}$ is an edge geodetic set of $G \pcomp G'$ with $\text{megs}(G)+\text{cmegs}(G')-2$ vertices because every edge $e \in E(G \pcomp G')$ is on a shortest path
        \begin{enumerate}
            \item from a vertex of $S$ to a vertex of $S$, if $e \in E(G)$ or
            \item from a vertex of $(S'_c \setminus \{s',t'\}) \cup \{s\}$ to a vertex of $S'_c \setminus \{s',t'\}$ or from a vertex of $S'_c \setminus \{s',t'\}$ to a vertex of $(S'_c \setminus \{s',t'\}) \cup \{t\}$, if $e \in E(G')$.
        \end{enumerate}
        \medskip
        Vertex set $T_c^{\text{\,-} s',{\text{\,-} t'}}$ is a minimum edge geodetic set of $G \pcomp G'$ because $R = T_c^{\text{\,-} s',{\text{\,-} t'}} \cap V(G)$ is an edge geodetic set of $G$, $R'_c = (T_c^{\text{\,-} s',{\text{\,-} t'}} \cap V(G')) \cup \{s',t'\}$ is a closed edge geodetic set of $G'$ and $|T_c^{\text{\,-} s',{\text{\,-} t'}}| = |R| + |R'_c| - 2$.

        \item If $\text{sp}(G) > \text{sp}(G')$ then exchange $G$ and $G'$ and consider the case where $\text{sp}(G) < \text{sp}(G')$.
        
    \end{enumerate}
    
    \item Let $S_c$ be a minimum closed edge geodetic set of $G$, $S'_c$ be a minimum closed edge geodetic set of $G'$ and $T_{cc}^{\text{\,-} s',t'} = S_c \cup (S'_c \setminus \{s',t'\})$.

    \medskip
    Vertex set $T_{cc}^{\text{\,-} s',t'}$ is a closed edge geodetic set of $G \pcomp G'$ because every edge $e \in E(G \pcomp G')$ is on a shortest path
    \begin{enumerate}
        \item from a vertex of $S_c$ to a vertex of $S_c \setminus \{t\}$ or from a vertex of $S_c \setminus \{s\}$ to a vertex of $S_c$ , if $e \in E(G)$, or
        \item from a vertex of $(S'_c \setminus \{s',t'\}) \cup \{s\}$ to a vertex of $S'_c \setminus \{s',t'\}$ or from a vertex of $S'_c \setminus \{s',t'\}$ to a vertex of $(S' \setminus \{s',t'\}) \cup \{t\}$, if $e \in E(G')$.
    \end{enumerate}
    Vertex set $T_{cc}^{\text{\,-} s',t'}$ is a minimum closed edge geodetic set of $G \pcomp G'$ because $R_c = T_{cc}^{\text{\,-} s',t'} \cap V(G)$ is a closed edge geodetic set of $G$, $R'_c = (T_{cc}^{\text{\,-} s',t'} \cap V(G')) \cup \{s',t'\}$ is a closed edge geodetic set of $G'$ and $|T_{cc}^{\text{\,-} s',t'}| = |R_c| + |R'_c| - 2$.

\end{enumerate}

\end{enumerate}
    
\end{proof}

\begin{figure}[hbt]
\centerline{\includegraphics[width=160pt]{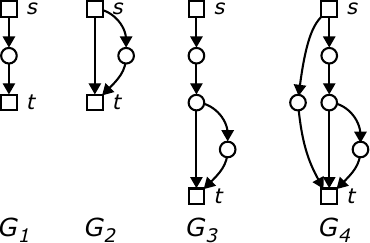}}
\caption{Four series-parallel digraphs $G_1,\ldots,G_4$ defined as follows. 
\newline
$\begin{array}{llll}
G_1 = I_{(s,t)} \scomp I_{(s,t)}, & G_2 = I_{(s,t)} \pcomp G_1, &
G_3 = G_1 \scomp G_2, & G_4 = G_1 \pcomp G_3 \\
\end{array}$
}
\label{Figure13}
\end{figure}

We illustrate the calculation rules from theorem \ref{Theorem06} using the example graphs of figure \ref{Figure13}.
$$
\begin{array}{l|l|l|l}
G & \text{sp}(G) & \text{megs}(G) &  \text{cmegs}(G) \\
\hline
I_{(s,t)}                        & 1.(a)           & 1.(b)     & 1.(c) \\
                                 & 1               & 2         & \text{undefined} \\
\hline
G_1 = I_{(s,t)} \scomp I_{(s,t)} & 2.(a)           & 2.(b)     & 2.(c), \text{ otherwise} \\
                                 & 1+1 = 2         & 2+2-2 = 2 & 2+2-1 = 3 \\
\hline
G_2 = I_{(s,t)} \pcomp G_1       & 3.(a)           & 3.(b), \text{sp}(I_{(s,t)}) < \text{sp}(G_1) & 3.(c), \text{cmegs}(I_{(s,t)}) = \text{\rm undefined} \\
                                 & \min\{1,2\} = 1 & 2+3-2 = 3 & \text{undefined} \\
\hline
G_3 = G_1 \scomp G_2             & 2.(a)           & 2.(b) & 2.(c), \text{ otherwise} \\
                                 & 2 + 1 = 3 & 2+3-2 = 3 & 2+3-1 = 4 \\
\hline
G_4 = G_1 \pcomp G_3             & 3.(a)           & 3.(b), \text{sp}(G_1) < \text{sp}(G_3)     & 3.(c), \text{ otherwise} \\
                                 & \min\{2,3\} = 2 & 2+4-2 = 4 & 3+4-2 = 5 \\

\end{array}
$$

This approach can easily be extended to compute an explicit minimum (edge) geodetic set, instead of simply determining its size by tracking the vertex sets during the computation. A detailed discussion of these aspects is omitted here and left to the interested reader.

The ideas used here do not appear to be transferable to undirected graphs. When performing the parallel composition of two undirected series-parallel graphs $G_1$ and $G_2$, shortest paths between the vertices of $G_1$ and vertices of $G_2$ must also be considered. This is not the case in the directed version, which simplifies the analysis.

\section{Conclusion}

We proved the NP-completeness of {\sc MGS}, {\sc MSGS}, and {\sc MMoGS} for directed acyclic geodesic planar graphs.
Furthermore, we proved the NP-completeness of {\sc MEGS}, {\sc MSEGS}, and {\sc MMoEGS} for directed acyclic geodesic graphs. This complements the known NP-completeness results for computing geodesic sets.

In addition, we proved that minimum geodetic sets and minimum edge geodetic sets can be computed in linear time for directed series-parallel graphs.

\bibliography{Bibliography/bibliography_references}

\end{document}